\newtheorem{theorem}{Theorem}
\newtheorem*{theorem*}{Theorem}
\newtheorem{lemma}{Lemma}
\numberwithin{lemma}{section}
\newtheorem{cor}{Corollary}
\newtheorem{conj}{Conjecture}
\theoremstyle{definition}
\newtheorem*{rem}{Remark}
\numberwithin{equation}{section}
\newcommand{\Aut}{\operatorname{Aut}}
\newcommand{\pMod}[1]{\,(\mathrm{mod}\ #1)}
\begin{document}

\vspace{1cm}

\title[\textsc{On  recognition  of simple groups with  disconnected prime graph by spectrum
}]{\textsc{On  recognition  of simple groups with  disconnected prime graph by spectrum
}}

\author{\textsc{V.V. Panshin}}
\address{Novosibirsk State University}
\email{v.pansh1n@yandex.ru}

\thanks{The work is supported by the Russian Science Foundation, Project 24–11–00127, https://rscf.ru/en/project/24-11-00127/}

\begin{abstract}
Two finite groups are said to be isospectral if they have the same sets of element orders. This paper completes the description of finite groups that are isospectral to finite simple groups with disconnected prime graph.

{\bf Keywords:} simple classical group, element order, recognition by spectrum, prime graph.
\end{abstract}

\begingroup
\def\uppercasenonmath#1{} 
\let\MakeUppercase\relax 
\maketitle
\endgroup

\section{Introduction}

The spectrum $\omega(G)$ of a finite group $G$ is the set of element orders of $G$. Groups whose spectra coincide are said to be \textit{isospectral}. We say that \textit{the problem of recognition by spectrum is solved} for $G$ if the number $h(G)$ of distinct (pairwise non-isomorphic) finite groups isospectral to $G$ is known, and if $h(G)$ is finite, then all such groups are explicitly described. If $h(G)=1$, then $G$ is called \textit{recognizable by spectrum}; if $h(G)=\infty$, then it is called \textit{unrecognizable}. A survey of the current state of the recognition by spectrum problem can be found in \cite{22Survey}.

By~\cite[Lemma 1]{98Maz.t}, if $G$ has a nontrivial normal solvable subgroup, then $h(G)=\infty$, therefore the problem of recognition by spectrum is of interest for groups with trivial solvable radical and, primarily, for nonabelian simple groups. At present, the problem of recognition by spectrum is solved for all nonabelian simple groups, except for the groups from the following list $\mathcal{L}$ (in all cases $q$ is odd, and simple classical groups are denoted according to~\cite{85Atlas}):
\begin{enumerate} 
\item[(a)] $L_n(q)$, with $8\leq n \leq 26$, $n$ is not a prime;
\item[(b)] $U_n(q)$, with $7\leq n \leq 26$;
\item[(c)] $S_{2n}(q)$ and $O_{2n+1}(q)$, with $5\leq n \leq 15$, $n\neq8$;
\item[(d)] $O^+_{2n}(q)$, with $5\leq n \leq 18$;
\item[(e)] $O^-_{2n}(q)$, with $5\leq n \leq 17$, $n\neq8,16$.   \end{enumerate}

If $L$ is not in the list $\mathcal{L}$ and $L\neq U_5(q)$, $L_6(q)$, $U_6(q)$, then groups isospectral to $L$ can be found in \cite[Theorem 2.1]{22Survey}. The groups $U_5(q)$, $L_6(q)$, and $U_6(q)$ were considered in \cite{24GrePan.t}. It should be noted that for some groups in $\mathcal{L}$, the recognition problem is also solved, but these groups are not explicitly mentioned to keep the list $\mathcal{L}$ compact.

We call a finite nonabelian simple group $L$ \textit{almost recognizable by spectrum} if any finite group $G$ isospectral to $L$ is an almost simple group with socle isomorphic to $L$, i.e., up to isomorphism, we have $L\leq G\leq \Aut L$. Clearly, for an almost recognizable group $L$, the recognition problem is reduced to describing all almost simple groups with socle $L$ that are isospectral to $L$. Since for any nonabelian simple group $L$, the almost simple groups with socle $L$ that are isospectral to $L$ are known (see \cite[Theorem 3.7]{22Survey}), if a simple group is almost recognizable, then the problem of recognition by spectrum is solved for this group.

There is a conjecture \cite[Conjecture 3.10]{22Survey} that all groups in $\mathcal{L}$ are almost recognizable by spectrum. The proof of this conjecture is reduced to a certain special case: if $L\in \mathcal{L}$ and $G$ is a finite group isospectral to $L$, then by virtue of~\cite[Theorems 3.1, 3.6, and 3.8]{22Survey}, either $G$ is an almost simple group with socle isomorphic to $L$, or $G$ has a unique nonabelian composition factor $S$ and $S$ is a group of Lie type over a field whose characteristic is different from the defining characteristic of $L$. Thus, the conjecture about almost recognizability of all groups in $\mathcal{L}$ is equivalent to the following conjecture.

\begin{conj}\label{conj1} Let $L\in\mathcal{L}$, let $S$ be a finite simple group of Lie type over a field whose characteristic differs from the defining characteristic of $L$, and let $G$ be a finite group with $\omega(G)=\omega(L)$. Then $S$ is not a composition factor of $G$.
\end{conj}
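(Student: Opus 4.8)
The plan is to argue by contradiction, exploiting the fact that isospectrality forces the two prime graphs to coincide. Write $q=p^f$ for the defining characteristic $p$ of $L$ and let $r$ be the defining characteristic of $S$, so that $r\neq p$. Since $\omega(G)=\omega(L)$, the Gruenberg--Kegel graphs satisfy $GK(G)=GK(L)$; in particular $G$ has disconnected prime graph, and since every prime dividing a group order occurs in the spectrum, $\pi(S)\subseteq\pi(G)=\pi(L)$. First I would record the structural consequence of having a nonabelian composition factor together with a disconnected prime graph: by the Gruenberg--Kegel theorem, $G$ has a normal series $1\le K\le M\le G$ in which $K$ is a nilpotent $\pi_1(G)$-group, $\overline M=M/K$ is almost simple with socle $S$ (our unique nonabelian composition factor), $G/M$ is a $\pi_1(G)$-group, and every connected component of $GK(G)$ other than the component $\pi_1(G)$ containing the prime $2$ is also a connected component of $GK(S)$. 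Thus each component $\pi_i(L)$ with $i\ge 2$ must occur verbatim as a union of connected components of the prime graph of $S$.

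Next I would make the components of $GK(L)$ explicit. For each classical group in the list $\mathcal{L}$ the connected components are described, via the adjacency criterion, in terms of the numbers $e(q,t)$, the multiplicative order of $q$ modulo an odd prime $t\neq p$: the components with $i\ge 2$ consist precisely of the primitive prime divisors $t$ of $q^m-1$ for the distinguished large values $m$ attached to the rank and type of $L$ (for instance $m$ close to $2n$ for the symplectic and orthogonal families). These primes are controlled by the cyclotomic value $\Phi_m(q)$, are large relative to $m$ by Zsigmondy's and Bang's theorems, and the component carries two invariants I would track throughout: its cardinality, and the largest prime power supported on it that lies in $\omega(L)$.

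The core of the argument is then a cross-characteristic comparison. Because $S$ is of Lie type in characteristic $r\neq p$, the adjacency criterion for $GK(S)$ expresses its components through the numbers $e(q',s)$ for the primes $s\mid|S|$, where $q'$ is a power of $r$; hence each component of $GK(S)$ is a set of primitive prime divisors of $(q')^{m'}-1$ for suitable $m'$ read off from the type and rank of $S$. The requirement that a fixed component $\pi_i(L)$ with $i\ge 2$, a set of primitive prime divisors of $q^m-1$, coincide with a component of $GK(S)$ therefore becomes a Diophantine coincidence between $\Phi_m(q)$ and $\Phi_{m'}(q')$ in two distinct characteristics. I would preclude these coincidences by matching the invariants above: the cardinalities of the components, the maximal element orders they support in $L$ versus in $S$, and the placement of the two defining characteristics (the vertex $p$ lies in the large component of $L$ and is joined to many primes, whereas $r$ plays the analogous distinguished role for $S$), together with the Zsigmondy-type lower bounds which force the primes of a genuine component to be too large, or too few, to reproduce $\pi_i(L)$.

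The hard part will be the exhaustive nature of this matching. One must run through every family in $\mathcal{L}$ and, for each, through all simple groups of Lie type $S$ whose prime set is contained in $\pi(L)$ and whose prime graph could a priori realise the required isolated components. The delicate obstruction is that a single prime can simultaneously be a primitive prime divisor of $q^m-1$ and of $(q')^{m'}-1$ for suitable $q,q',m,m'$, so a set-theoretic coincidence of a few primes is never enough and one is forced into sharp estimates on primitive prime divisors and on the orders of maximal tori to rule out any whole-component coincidence. I expect the genuinely tight cases to be those in which $S$ is itself a classical group of comparable rank, where the arithmetic of the large tori of $S$ comes closest to that of $L$, and these will demand the finest control on element orders and on component sizes.
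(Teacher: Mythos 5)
Your opening reduction --- passing to the Gruenberg--Kegel structure theorem and matching the components $\pi_i(L)$, $i\ge 2$, with components of $GK(S)$ --- is exactly how the paper's proof begins (Section~\ref{s:general}, via Lemma~\ref{l:gk}, after citing Aleeva's result to exclude the Frobenius and $2$-Frobenius alternatives, a step you omit but which is citable). Two things then go wrong. First, scope: your claim that $GK(G)=GK(L)$ is ``in particular disconnected'' is false for most members of $\mathcal{L}$; this whole approach only applies to the members with disconnected prime graph, which is precisely the restriction under which the paper proves the conjecture (Theorem~\ref{t:t1}), so you cannot present it as a proof of Conjecture~\ref{conj1} in full. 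Second, and more seriously, the invariants you propose for killing the cross-characteristic coincidences cannot do the job. The cardinality of a component $\pi_i(L)$, $i\ge2$, is the number of prime divisors of $n_2(L)=k_n(\pm q)$, which fluctuates erratically with $q$ and matches no controllable invariant of $S$; and Zsigmondy-type lower bounds say nothing, because both sides of a prospective coincidence have comparable magnitude. What the machinery actually delivers is much sharper than a coincidence of vertex sets: by Lemma~\ref{l:gk} one has the equality of sets of element orders $\omega_i(G)=\omega_j(S)$, and since $|\mu_j(S)|=1$ for simple $S$ (Lemma~\ref{l:gk2}) this is an exact Diophantine equation $n_2(L)=n_j(S)$, e.g. $\tfrac{q^n+1}{(n,q+1)(q+1)}=\tfrac{u^m+1}{(2,u-1)}$, in two independent prime powers $q,u$. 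Your plan stops where the real work begins: no counting or size estimate eliminates such equations.

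The paper closes this gap with ideas absent from your proposal. (i) The subtract-one trick with Lemma~\ref{MAGr_lemma}: $n_2(L)-1$ is neither a prime power nor an element order of $L=U_n(q)$; since for the symplectic/orthogonal candidates $S$ one gets $n_2(S)-1=u^m$ ($u$ even) or $(u^m-1)/2\in\omega(S)\subseteq\omega(L)$ ($u$ odd), these $S$ are eliminated at once (Lemma~\ref{no_symplectic_etc}), and the same subtraction drives the linear/unitary case. (ii) An effective bound on the rank of $S$: as $S$ is a section of $G$, $\exp(S)$ divides $\exp(G)=\exp(L)$, and the lower bounds of Lemma~\ref{l:exponents} (e.g. $\exp(S)>u^{3F(m)/4}$) then bound $m$ (Lemmas~\ref{finite_cases} and~\ref{S_bounds}); your containment $\pi(S)\subseteq\pi(L)$ has no effective analogue. (iii) The reduction of Lemma~\ref{l:small}, resting on prior work, which allows one to assume $S$ is classical and different from $L_2(u)$, $S_4(u)$; without it your ``exhaustive matching'' must also treat all exceptional groups, for which you give no argument. (iv) For the finitely many surviving equations, genuinely computational arithmetic: integral gcds of $b$ with the cyclotomic values $\Phi_e(\tau u)$ viewed as polynomials in $u$, the congruence $r\equiv1\pmod{n-1}$ for primitive divisors (Lemma~\ref{ri_structure}), and explicit factorizations (Tables~\ref{ks}, \ref{tab:dis2}, \ref{tab:dis0}). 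You correctly predict that classical $S$ of comparable rank is the tight case, but the ``finest control'' needed there is not an estimate --- it is this case-by-case, computer-assisted elimination, and without it (or a genuinely new idea) the proof does not close.
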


The main goal of this paper is to solve the recognition problem for all groups in the list $\mathcal L$ whose prime graph is disconnected. The \textit{prime graph} (or \textit{Gruenberg–Kegel graph}) $GK(G)$ of a group $G$ is the graph whose vertices are the prime divisors of the order of $G$, with two distinct vertices $r$ and $s$ adjacent if and only if $rs\in\omega(G)$. The simple groups with disconnected prime graph were found by Williams~\cite{81Wil} and Kondrat'ev~\cite{89Kon.t}; corrected tables listing these groups can be found, for example, in~\cite{02Maz.t}. 

Now we give a brief overview of the results concerning the recognition problem for simple groups with disconnected prime graph. All alternating groups with disconnected prime graph, except $A_6$, are recognizable by spectrum (see \cite{00Zav.t}, \cite{00KonMaz.t}). The group $A_{6}$ is unrecognizable~\cite{91BrShi}. All sporadic simple groups have disconnected prime graph, and all of them, except $J_2$, are recognizable by spectrum, while $J_2$ is unrecognizable (see~\cite{98MazShi} and the references therein).

All exceptional groups of Lie type, except for the groups $E_7(q)$ with $q > 3$, have  disconnected prime graph. In \cite{13Maz.t}, it was shown that ${}^3D_4(2)$ is unrecognizable. Recognizability of the Ree and Suzuki groups was established in \cite{92Shi, 93BrShi, 99DenShi}, while for the groups $G_2(q)$ and $E_8(q)$, it was established in \cite{02Vas.t, 13VasSt.t, 10Kon}. Moreover, the groups $E_7(2)$ and $E_7(3)$ are recognizable by the prime graph \cite{14Kon.r}. The solution to the recognition problem for the groups ${}^3D_4(q)$, with $q \neq 2$, $F_4(q)$, $E_6(q)$, and ${}^2E_6(q)$, was obtained in a series of works: almost recognizability was proved in \cite{05AlKon.t, 06Al.t, 07Kon.t, 15Gr}, isospectral almost simple groups were found in \cite{16GrZv, 16Zve.t}. The final results are summarized in \cite[Table 1]{16Zve.t}.

The classical simple groups with disconnected prime graph are listed in Table \ref{disconnected_graph_table} in Section~\ref{s:prelim}. The solution to the recognition problem for the groups $L_2(q)$, $L_3(q)$, $U_3(q)$, and $S_4(q)$ can be found in \cite{94BrShi}, \cite{04Zav1.t}, \cite{06Zav.t}, and \cite{02Maz.t}. The groups $S_8(q)$ with $q \neq 7^{m}$ and $O_9(q)$ are unrecognizable, while $S_8(7^m)$ are recognizable \cite{14GrSt, 25Gre.r}.

The recognition problem for linear and unitary groups in characteristic $2$ was solved in \cite{08VasGr.t, 13GrShi}. All orthogonal and symplectic groups in characteristic $2$, except for the groups $S_4(q)$, $S_6(2)$, $O^+_8(2)$, and $S_8(q)$, are recognizable (see \cite{15VasGr.t}). All groups $O_{2n}^-(q)$ with disconnected prime graph and odd $n$ and $q$ are recognizable \cite{08AlKon, 09Kon}, as well as the groups $O_{2r+1}(3)$, where $r \geq 5$ is prime, and $S_{2r}(3)$, where $r \geq 3$ is prime, \cite{10Zin, 10SheShiZin.t}. It is also known that $\omega(S_6(2)) = \omega(O_8^+(2))$, $h(S_6(2)) = 2$ \cite{97Maz.t}, and $\omega(O_7(3)) = \omega(O_8^+(3))$, $h(O_7(3))=2$ \cite{97ShiTan}.

The almost recognizability of the remaining symplectic and orthogonal groups with disconnected prime graph has been established in \cite{09AleKon.t, 09HeShi, 09Kon1, 09VasGorGr.t, 15Gr}.
The almost recognizability of $L_r(q)$, where $r \ge 5$ is prime, was proved in \cite{12GrLyt.t}. Together with the description of almost simple classical groups that are isospectral to their socle from \cite{16Gr.t, 17Gre, 18Gr.t}, these results give the solution for the recognition problem for these groups.

Thus, for groups with disconnected prime graph in $\mathcal{L}$, the recognition problem remains unresolved only for the groups $U_n(q)$, where $n \in \{7,11,13,17,19,23\}$, and $L^\varepsilon_{n}(q)$, where $n \in \{8,12,14,18,20,24\}$ and $q-\varepsilon$ divides $n$ (hereafter, $L^{+}_n(q) = L_n(q)$ and $L^{-}_n(q) = U_n(q)$). We prove Conjecture 1 for these groups and, as a consequence, establish that they are almost recognizable by spectrum.

\begin{theorem}\label{t:t1}

Let $L$ be a linear or unitary group in the list $\mathcal{L}$ with disconnected prime graph, and let $G$ be a finite group such that $\omega(G)=\omega(L)$. If $S$ is a nonabelian composition factor of $G$, then $S$ is not a group of Lie type over a field whose characteristic differs from the defining characteristic of $L$.
\end{theorem}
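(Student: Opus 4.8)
The plan is to argue by contradiction: suppose $S$ is a nonabelian composition factor of $G$ that is a simple group of Lie type in a characteristic $t$ different from the defining characteristic $p$ of $L$. As explained in the introduction, the reduction based on \cite[Theorems 3.1, 3.6, and 3.8]{22Survey} guarantees that then $S$ is the \emph{unique} nonabelian composition factor of $G$. Since $\omega(G)=\omega(L)$ and $GK(L)$ is disconnected, the graph $GK(G)$ is disconnected as well, and $G$ is neither Frobenius nor $2$-Frobenius (these possess no nonabelian composition factor of the required kind); hence the Gruenberg--Kegel structure theorem puts $G$ into the shape $1\trianglelefteq K\trianglelefteq M\trianglelefteq G$, where $K$ is a nilpotent $\pi_1$-group, $\pi_1$ being the component of $GK(L)$ containing $2$ (which for these groups also contains $p$, since $2p\in\omega(L)$), the socle of $M/K$ is isomorphic to $S^k$, and $G/M$ is a $\pi_1$-group. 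In particular, every prime lying in a component of $GK(L)$ other than $\pi_1$ divides $|S|$ and divides neither $|K|$ nor $|G/M|$. I would fix this as the standing setup.

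First I would make the arithmetic of $\omega(L)$ for $L=L^\varepsilon_n(q)$ explicit and isolate the primitive prime divisors $r_k$ of $q^k-\varepsilon^k$ that generate the components of $GK(L)$ lying outside $\pi_1$; the hypotheses $n\in\{7,11,13,17,19,23\}$ with $\varepsilon=-$, and $n\in\{8,12,14,18,20,24\}$ with $q-\varepsilon\mid n$, are precisely the arithmetic conditions under which these components split off. This yields a coclique $\rho$ of $GK(L)$ consisting of such large primitive prime divisors, with $|\rho|$ increasing with $n$. Because each prime of $\rho$ divides $|S|$ but not $|K|$, nonadjacency transfers to $S$: if $r,s\in\rho$ and $rs\notin\omega(L)$, then an element of order $rs$ in a factor $S_1\cong S$ of $\operatorname{soc}(M/K)$ would lift, via Schur--Zassenhaus using $(rs,|K|)=1$, to an element of order $rs$ in $G$, forcing $rs\in\omega(G)=\omega(L)$, a contradiction. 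Hence $\rho$ is also a coclique of $GK(S)$, so the independence number of $S$ is at least $|\rho|$.

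Next I would combine this lower bound with the classification of finite simple groups by independence number to exclude the sporadic and exceptional types for the larger members of $\mathcal L$ and to confine $S$ to a classical group over a field $\mathbb{F}_{t^f}$ of Lie rank $\ell$. Inside $S$ the primes of $\rho$ must themselves be primitive prime divisors, now of numbers $t^{e}-1$; matching their multiplicative orders and sizes against those of the $r_k$ bounds $t^f$ from above and ties $\ell$ to $n$, since a coclique of primitive prime divisors of size $|\rho|$ forces $\ell$ to be of order $n$, while $\meo(S)\gtrsim (t^f)^{\ell}$ must divide one of the dominant Singer-type element orders of $L$, which are of magnitude about $q^{\,n-1}$. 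This reduces the problem to a finite list of candidate pairs $(S,t)$ together with an essentially Diophantine comparison between the two characteristics.

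The final step, which I expect to be the main obstacle, is to eliminate each surviving candidate $(S,t)$. For every one I would exhibit either two primes adjacent in $GK(S)$ but nonadjacent in $GK(L)$, or a single element order present in $\omega(S)$ yet absent from $\omega(L)$; the disconnection pattern forced by $q-\varepsilon\mid n$ is exactly what makes the required nonedges of $GK(L)$ available. The delicate point is the numerical matching of primitive prime divisors across the characteristics $p$ and $t$, since the independence-number bound by itself does not dispose of the borderline cases in which $S$ is itself a linear or unitary group in characteristic $t$ of size comparable to $L$; these I would rule out by the detailed arithmetic of $\omega(L)$ together with the explicit element-order formulas for $S$. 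Carrying out this incompatibility uniformly for all odd $q$ and all admissible $t^f$, rather than group by group, is where the bulk of the work lies, and a successful treatment establishes Conjecture~\ref{conj1} for the groups in question and hence their almost recognizability by spectrum.
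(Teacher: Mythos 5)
Your proposal has the right skeleton (contradiction, Gruenberg--Kegel structure, transferring prime-graph information from $L$ to $S$, reduction to a finite list, case elimination), but it has a genuine gap at its core: you never use the full strength of the Gruenberg--Kegel lemma, and without it your final step cannot be carried out. Lemma~\ref{l:gk}(b) gives not only the structure $K\trianglelefteq M\trianglelefteq G$ but also that $s(S)\ge s(G)$ and that each non-principal component satisfies $\omega_i(G)=\omega_j(S)$ \emph{as sets of numbers}. Combined with Lemma~\ref{l:gk2} (each such component of a simple group consists of the divisors of a single known number $n_i(S)$), this yields the exact numerical equation $n_2(L)=n_i(S)$, where both sides are explicitly known; moreover, disconnectedness of $GK(S)$ confines $S$ to the short explicit list of Table~\ref{disconnected_graph_table} (after Lemma~\ref{l:small}). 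This equation is the engine of the entire proof: subtracting $1$ from both sides and exploiting arithmetic facts such as Lemma~\ref{MAGr_lemma} (the number $n_2(L)-1$ does not lie in $\omega(U_n(q))$), together with exponent comparisons (Lemmas~\ref{l:exponents}, \ref{finite_cases}, \ref{S_bounds}) and computer-assisted polynomial gcd computations, disposes of every candidate $S$. Your proposal replaces this by coclique transfer, independence numbers, and a $\meo$-versus-Singer-cycle comparison, which bounds the rank of $S$ but leaves only divisibility and adjacency constraints; these are far weaker than the equality $n_2(L)=n_i(S)$, and the ``essentially Diophantine comparison'' you invoke is never pinned down to concrete equations. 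Indeed, you concede that the elimination of the surviving candidates is ``the main obstacle'' and ``where the bulk of the work lies'' --- but that elimination \emph{is} the theorem, so what you have is a plan whose decisive step is missing.

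Two further points. First, your dismissal of the Frobenius and $2$-Frobenius cases is unjustified as stated: a Frobenius complement can involve $\mathrm{SL}_2(5)$, so a Frobenius group can have $A_5\cong L_2(4)$ as a composition factor, which is a group of Lie type in a characteristic different from $p$; the paper instead excludes these cases by citing Aleeva's theorem~\cite{03Ale.t} on groups whose spectra equal those of Frobenius or double Frobenius groups. Second, in the setting of Lemma~\ref{l:gk} the relevant section of $G$ sits between $S$ and $\Aut(S)$ for a single simple group $S$, not a power $S^k$; this is harmless here but signals that you are quoting a generic socle decomposition rather than the structure theorem actually needed.
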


\begin{cor}\label{c:t4} 
Let $L$ be a linear or unitary group in $\mathcal{L}$ with disconnected prime graph, and let $G$ be a finite group such that $\omega(G)=\omega(L)$. Then $G$ is an almost simple group with socle isomorphic to $L$.
\end{cor}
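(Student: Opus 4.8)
The plan is to obtain Corollary~\ref{c:t4} as an immediate consequence of Theorem~\ref{t:t1} together with the structural reduction already recorded in the introduction. Recall that, since $L\in\mathcal{L}$ and $\omega(G)=\omega(L)$, the combination of~\cite[Theorems 3.1, 3.6, and 3.8]{22Survey} forces exactly one of two alternatives: either $G$ is an almost simple group with socle isomorphic to $L$, or $G$ has a unique nonabelian composition factor $S$, and this $S$ is a simple group of Lie type over a field whose characteristic differs from the defining characteristic of $L$. The first alternative is precisely the conclusion we want, so it suffices to rule out the second.

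To do this, I would invoke Theorem~\ref{t:t1} directly. Every group $L$ to which the Corollary applies --- a linear or unitary group in $\mathcal{L}$ with disconnected prime graph --- is exactly a group covered by Theorem~\ref{t:t1}. That theorem asserts that no nonabelian composition factor $S$ of $G$ can be a group of Lie type over a field whose characteristic differs from that of $L$. In particular, the composition factor $S$ produced in the second alternative cannot exist, so that alternative is vacuous. Hence the first alternative holds, and $G$ is almost simple with socle isomorphic to $L$, which proves the Corollary.

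I do not expect any genuine obstacle in this deduction itself: once Theorem~\ref{t:t1} is available, the Corollary is a one-line logical consequence of the published dichotomy, and the phrasing of Theorem~\ref{t:t1} (which speaks of an arbitrary nonabelian composition factor, not merely the unique one guaranteed by the reduction) already matches exactly what is required. All of the real difficulty is concentrated in Theorem~\ref{t:t1}, where one must exclude, for each admissible $L$ and each cross-characteristic simple group of Lie type $S$, the possibility that $\omega(S)$ --- and hence $\omega(G)$ --- is compatible with $\omega(L)$; this is where the spectrum comparisons and the arithmetic of element orders carry the weight.
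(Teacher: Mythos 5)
Your proposal is correct and matches the paper's own (implicit) argument exactly: the paper derives the Corollary from Theorem~\ref{t:t1} via the dichotomy of \cite[Theorems 3.1, 3.6, and 3.8]{22Survey} stated in the introduction, which is precisely the reduction you invoke. Nothing further is needed.
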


As previously noted, having almost recognizability, we know the solution to the recognition problem. Thus, Corollary~\ref{c:t4} and \cite[Theorems 2 and 3]{17Gre} provide a complete description of finite groups isospectral to linear and unitary groups with disconnected prime graph in the list $\mathcal{L}$. Since, as shown above, for all other finite simple groups $L$ with disconnected prime graph, the number $h(L)$ was found and in the cases $h(L)<\infty$, all finite groups isospectral to $L$ are described, the following general result holds.

\begin{theorem}\label{t:main}
The problem of recognition by spectrum is solved for all finite simple groups with disconnected prime graph.
\end{theorem}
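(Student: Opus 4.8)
The plan is to read Theorem~\ref{t:main} as a census: for each finite simple group $L$ with disconnected prime graph I must exhibit a determination of $h(L)$, together with an explicit description of all groups isospectral to $L$ whenever $h(L)<\infty$. By the Williams--Kondrat'ev classification~\cite{81Wil,89Kon.t} (with the corrected tables in~\cite{02Maz.t}, and Table~\ref{disconnected_graph_table} in the classical case), every such $L$ is alternating, sporadic, an exceptional group of Lie type distinct from $E_7(q)$ with $q>3$, or classical. I would organise the argument around these four families, invoking in each case the results surveyed in the Introduction and supplying the one missing ingredient from Corollary~\ref{c:t4}.

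The alternating and sporadic families are immediate: all such groups are recognizable except $A_6$ and $J_2$, and for these two the value $h=\infty$ is already recorded, so the problem is solved throughout. For the exceptional groups of Lie type I would assemble the determinations obtained in the works cited in the Introduction, which jointly cover every family admitting a disconnected prime graph---the Suzuki, Ree, $G_2$, ${}^3D_4$, $F_4$, $E_6$, ${}^2E_6$ groups, the two groups $E_7(2)$ and $E_7(3)$, and $E_8(q)$---with the resulting tables of isospectral almost simple groups consolidated in~\cite{16Zve.t}.

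The classical groups carry the weight of the theorem. First I would split off those lying outside $\mathcal{L}$: by \cite[Theorem~2.1]{22Survey}, supplemented by \cite{24GrePan.t} for $U_5(q)$, $L_6(q)$, $U_6(q)$, the recognition problem is already solved for every nonabelian simple group not in $\mathcal{L}$, and this disposes of all classical groups of small rank and all classical groups in characteristic $2$. For the classical groups that do belong to $\mathcal{L}$ and have disconnected prime graph, the references assembled in the Introduction resolve all of them apart from one explicitly delimited collection of linear and unitary groups, by way of the almost-recognizability results together with the descriptions of isospectral almost simple groups in \cite{16Gr.t,17Gre,18Gr.t}. That surviving collection is precisely $U_n(q)$ with $n\in\{7,11,13,17,19,23\}$ together with $L^\varepsilon_n(q)$ for $n\in\{8,12,14,18,20,24\}$ and $(q-\varepsilon)\mid n$; for each of these, Corollary~\ref{c:t4} forces any $G$ with $\omega(G)=\omega(L)$ to be almost simple with socle $L$, while \cite[Theorems~2 and~3]{17Gre} list the almost simple groups with socle $L$ isospectral to $L$. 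Combining these two facts determines $h(L)$ and exhibits all isospectral groups, which closes the final case.

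The principal difficulty I foresee is not any single computation but exhaustiveness. One must verify that the union of all invoked results is literally the entire list produced by the classification, with no simple group with disconnected prime graph escaping every cited case, and one must treat the coincidences with care---most notably the spectral equalities $\omega(S_6(2))=\omega(O_8^+(2))$ and $\omega(O_7(3))=\omega(O_8^+(3))$, the separation of $S_8(7^m)$ from the other groups $S_8(q)$, and the exceptional isomorphisms among low-rank classical groups---so that each group is counted once and its value $h(L)$ is unambiguous. This bookkeeping, rather than new mathematics, is where the proof genuinely lives, and its success rests on Corollary~\ref{c:t4} having removed the unique remaining gap.
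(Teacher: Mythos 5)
Your proposal is correct and follows essentially the same route as the paper: the paper's proof of Theorem~\ref{t:main} is precisely the assembly of the previously known results surveyed in the Introduction (alternating, sporadic, exceptional, and already-settled classical cases) with Corollary~\ref{c:t4} and \cite[Theorems 2 and 3]{17Gre} closing the remaining family of linear and unitary groups in $\mathcal{L}$ with disconnected prime graph. Your identification of that residual family and of the bookkeeping character of the argument matches the paper exactly.
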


\section{Preliminaries}\label{s:prelim}

Let $a$ and $b$ be integers, and $r$ a prime. The greatest common divisor and least common multiple of $a$ and $b$ are denoted by $(a, b)$ and $[a, b]$, respectively. The set of all prime divisors of $a$ is denoted by $\pi(a)$. If $\pi$ is a set of primes, then $(a)_\pi$ is the $\pi$-part of $a$, i.e., the largest divisor of $a$, all of whose prime divisors are in $\pi$. In particular, we write $(a)_r$ and $(a)_{r'}$ when $\pi=\{r\}$ or $\pi$ is the set of all primes different from $r$, respectively. The following statement is well known (see, for example, \cite[Chapter IX, Lemma 8.1]{82HupBl2}).

\begin{lemma}\label{l:r-part}
Let $a$ be an integer, with $|a| > 1$, and let $m$ be a natural number. If $r$ is an odd prime and $a \equiv 1 \pMod r$, then $(a^m - 1)_r = (m)_r(a - 1)_r$.
\end{lemma}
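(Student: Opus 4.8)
The plan is to pass to the $r$-adic valuation $v_r$, writing $(x)_r = r^{v_r(x)}$, so that the claim becomes the additive identity $v_r(a^m - 1) = v_r(m) + v_r(a-1)$ under the hypotheses that $r$ is odd and $v_r(a-1) \geq 1$. I would first dispose of the case $r \nmid m$ using the factorization
\[
a^m - 1 = (a-1)\bigl(a^{m-1} + a^{m-2} + \cdots + a + 1\bigr).
\]
Since $a \equiv 1 \pmod{r}$, every term of the second factor is congruent to $1$ modulo $r$, so the sum is congruent to $m \pmod{r}$; when $r \nmid m$ this makes the second factor coprime to $r$, giving $v_r(a^m - 1) = v_r(a-1)$, which matches $v_r(m) = 0$.

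The core of the argument is the base case $m = r$, where I would show $v_r(a^r - 1) = v_r(a-1) + 1$. Writing $a = 1 + r^s t$ with $s = v_r(a-1) \geq 1$ and $r \nmid t$, the binomial expansion gives
\[
a^r - 1 = \sum_{k=1}^{r} \binom{r}{k} r^{ks} t^k.
\]
The $k=1$ term equals $r^{s+1} t$ and has valuation exactly $s+1$. For $2 \leq k \leq r-1$ the coefficient $\binom{r}{k}$ is divisible by $r$ exactly once, so those terms have valuation $1 + ks \geq 1 + 2s > s+1$, while the $k = r$ term has valuation $rs > s+1$; both final inequalities use $r \geq 3$ and $s \geq 1$. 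Hence the term of smallest valuation is unique, and $v_r(a^r - 1) = s + 1$.

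This base case is precisely the step that requires $r$ to be odd: for $r = 2$ the $k = 2$ term $\binom{2}{2} r^{2s} t^2 = r^{2s} t^2$ has valuation $2s$, which can coincide with or beat the valuation $s+1$ of the leading term, and the clean formula fails. From the base case I would obtain $v_r(a^{r^j} - 1) = v_r(a-1) + j$ by induction on $j$, applying the base case to $b = a^{r^{j-1}}$, which still satisfies $b \equiv 1 \pmod{r}$. Finally, for arbitrary $m$ I would write $m = r^j m'$ with $r \nmid m'$, set $b = a^{r^j}$, and combine the two cases: the induction gives $v_r(b - 1) = v_r(a-1) + j$, while applying the coprime case to $b^{m'} - 1$ yields $v_r(a^m - 1) = v_r(b-1) = v_r(a-1) + j = v_r(a-1) + v_r(m)$, as required. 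The only subtlety to watch is the bookkeeping in the combination step, ensuring that $b \equiv 1 \pmod{r}$ so that the coprime case genuinely applies to $b^{m'}-1$.
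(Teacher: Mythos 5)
Your proof is correct and complete. Note, however, that the paper does not prove this lemma at all: it is stated as well known, with a citation to Huppert--Blackburn (Chapter IX, Lemma 8.1), so there is no in-paper argument to compare against. What you have written is a self-contained proof of the standard ``lifting the exponent'' identity for odd primes, and all three pieces are sound: the coprime case via the geometric-sum factorization (the sum $a^{m-1}+\cdots+1 \equiv m \pmod r$), the key step $v_r(a^r-1)=v_r(a-1)+1$ via the binomial expansion with the unique minimal-valuation term $r^{s+1}t$, and the assembly $m=r^j m'$ with $b=a^{r^j}$. Your inequalities $1+ks\ge 1+2s>s+1$ and $rs>s+1$ do use $r\ge 3$ and $s\ge 1$ exactly as claimed, and this is indeed where oddness of $r$ enters; the lemma genuinely fails for $r=2$ (e.g.\ $a=3$, $m=2$). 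Two minor remarks: you only need $r \mid \binom{r}{k}$ for $1\le k\le r-1$, not the exact valuation $v_r\bigl(\binom{r}{k}\bigr)=1$ (though that stronger statement is true for prime $r$); and since the hypothesis $|a|>1$ permits negative $a$, it is worth observing that your argument is insensitive to sign --- writing $a=1+r^st$ with $t$ a possibly negative integer coprime to $r$ changes nothing, and $|a|>1$ rules out $a=\pm 1$, so $a^m-1\neq 0$ and all valuations are finite. Your proof is a perfectly good replacement for the external citation.
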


If a prime $r$ is odd and does not divide $a$, then $e(r,a)$ denotes the multiplicative order of $a$ modulo $r$. For an odd integer $a$, set $e(2, a) = 1$ if $a\equiv 1\pMod 4$, and $e(2, a) =
2$ if $a\equiv 3\pMod 4$. Let $|a|>1$ and $i\geq 1$. A prime $r$ is called a \emph{primitive prime divisor} of $a^i-1$ if $e(r, a) = i$. The set of all primitive prime divisors of $a^i-1$ is denoted by $R_i(a)$, and any element of $R_i(a)$ is denoted by $r_i(a)$. The existence of primitive prime divisors for all but finitely many pairs $(a,i)$ was proved by Bang \cite{86Bang} and Zsigmondy \cite{Zs}.

\begin{lemma}[Bang--Zsigmondy]
Assume that $a$ is an integer with $|a|>1$. Then for any positive integer $i$ the set $R_i(a)$ is nonempty, except in the cases when $(a,i)\in\{(2,1),(2,6),$ $(-2,2),$$(-2,3),(3,1),(-3,2)\}$.
\end{lemma}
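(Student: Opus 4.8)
The plan is to prove the theorem in the classical way, via cyclotomic polynomials, reducing the existence of primitive prime divisors to a lower bound on the size of $\Phi_n(a)$ together with an explicit analysis of a few small cases. Write $\Phi_n$ for the $n$-th cyclotomic polynomial, so that $a^n-1=\prod_{d\mid n}\Phi_d(a)$ and $\Phi_n(a)=\prod_\zeta(a-\zeta)$, the product running over the primitive $n$-th roots of unity. I would first build the dictionary between $R_n(a)$ and the prime factors of $\Phi_n(a)$ in the case of an \emph{odd} prime $r$: if $d=e(r,a)$, then $d\mid r-1$, and $a^n\equiv1\pMod r$ holds exactly when $d\mid n$; a short order computation then shows that $r\mid\Phi_n(a)$ if and only if either $n=d$, in which case $r\nmid n$ and $r$ is a primitive prime divisor, or $n=d\,r^s$ for some $s\ge1$, in which case $r\mid n$ and $r$ is non-primitive. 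Thus every prime factor of $\Phi_n(a)$ is either a primitive prime divisor of $a^n-1$ or divides $n$, and the only possible non-primitive factor is the largest prime divisor $r$ of $n$; moreover, for odd $r$, Lemma~\ref{l:r-part} gives $(\Phi_n(a))_r=r$. Consequently, whenever the prime $2$ plays no role, $R_n(a)\ne\varnothing$ unless $|\Phi_n(a)|$ equals $1$ or the single prime $r$ dividing $n$.

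The quantitative heart is the estimate $|\Phi_n(a)|\ge(|a|-1)^{\varphi(n)}$, which follows from $|a-\zeta|\ge|a|-1$ for each root $\zeta$ on the unit circle. For $|a|\ge3$ this already forces $(|a|-1)^{\varphi(n)}\le r\le n$ whenever $R_n(a)=\varnothing$, and since $\varphi(n)\to\infty$ while $n$ grows only polynomially in its largest prime factor, the inequality can hold for only finitely many pairs, all with $n$ small. For $|a|=2$ the crude bound degenerates, and I would instead invoke a sharper elementary estimate (for instance $\Phi_n(2)>n$ for all but finitely many $n$, the failures $n=1,6$ being exactly the source of the classical exceptions) to conclude in the same way. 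In both regimes this confines the possible exceptions to an explicit finite list of small pairs $(a,n)$.

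It then remains to examine these finitely many small pairs by hand, and here the two genuine subtleties appear: the nonstandard definition of $e(2,a)$ and the admission of negative $a$. For the prime $2$ one must use the convention $e(2,a)=1$ or $2$ according to $a\bmod 4$ and note that $\Phi_2(a)=a+1$ may be divisible by a high power of $2$; this is precisely what makes $2$ \emph{fail} to be a primitive divisor of $3-1=2$, producing the exception $(3,1)$ among positive $a$ (while $(2,1)$ is the degenerate case $2^1-1=1$ and $(2,6)$ is the genuine cyclotomic exception $\Phi_6(2)=3$). For negative $a=-b$ with $b=|a|>1$ I would separate the parity of $n$: if $n$ is even then $a^n-1=b^n-1$, while if $n$ is odd then $a^n-1=-(b^n+1)$, and in each case track how the multiplicative order changes under $b\mapsto -b$. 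It is this change of order---for instance $e(3,2)=2$ but $e(3,-2)=1$, since $-2\equiv1\pMod 3$---that destroys the would-be primitive divisors and yields the remaining exceptions $(-2,2)$, $(-2,3)$, and $(-3,2)$.

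The main obstacle is thus not the asymptotic estimate, which is routine, but the careful bookkeeping for the prime $2$ and for the sign of $a$ in the small cases: these nonstandard conventions are exactly what replace the single classical exception $(2,6)$ (together with the trivial $(2,1)$) by the longer list in the statement. I would therefore isolate $n\in\{1,2\}$, the prime $2$, and negative $a$ as explicit subcases, handled directly, rather than attempting to absorb them into the cyclotomic estimate.
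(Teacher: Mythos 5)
The paper itself offers no proof of this lemma: it is the classical Bang--Zsigmondy theorem, quoted with citations to Bang and Zsigmondy and adapted to the paper's conventions (negative bases are allowed, and $e(2,a)$ is defined by the residue of $a$ modulo $4$ rather than as a genuine multiplicative order). Your proposal supplies the standard cyclotomic-polynomial proof, and its skeleton is sound: the dictionary asserting that every prime factor of $\Phi_i(a)$ is either a primitive prime divisor or the largest prime divisor $r$ of $i$, occurring then only to the first power (your appeal to Lemma~\ref{l:r-part} there is exactly the right tool); the bound $|\Phi_i(a)|\ge(|a|-1)^{\varphi(i)}$, which confines potential exceptions with $|a|\ge3$ to finitely many small pairs; and the explicit isolation of $i\in\{1,2\}$, the prime $2$, and negative $a$. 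Your accounting of how the mod-$4$ convention and the sign change generate precisely the exceptions $(3,1)$, $(-2,2)$, $(-2,3)$, $(-3,2)$ on top of the classical $(2,1)$, $(2,6)$ is correct, and this is where your route earns its keep relative to the paper's bare citation: the theorems of Bang and Zsigmondy are stated for $a\ge2$ with the true order modulo $2$, so deriving the list in the statement from them requires exactly the bookkeeping you describe. Two points would need tightening in a full write-up. First, when $R_i(a)=\varnothing$ and $i\ge3$, the prime $2$ may still divide $\Phi_i(a)$ (this happens only when $i$ is a power of $2$, and then $(\Phi_i(a))_2=2$), so the decisive inequality should read $(|a|-1)^{\varphi(i)}\le 2i$ rather than $\le i$; this does not affect the finiteness argument. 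Second, for $|a|=2$ the ``sharper elementary estimate'' you invoke is the entire quantitative content of that case and must be proved, not cited; for instance, the M\"obius factorization $\Phi_i(2)=\prod_{d\mid i}(2^d-1)^{\mu(i/d)}$ gives $\Phi_i(2)\ge 2^{\varphi(i)}\prod_{j\ge1}(1-2^{-j})>2^{\varphi(i)-2}$, with a similar bound for $a=-2$ via $\Phi_i(-x)=\pm\Phi_{i'}(x)$, after which the remaining finite check is routine.
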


The following property of primitive divisors follows from Fermat's little theorem.

\begin{lemma}\label{ri_structure}
	Let $r\in R_i(a)$. Then $r=ik+1$, $k\geq1$.
\end{lemma}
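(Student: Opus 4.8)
The plan is to read the congruence straight off the definition, exactly as the sentence preceding the lemma advertises: the claim $r=ik+1$ is nothing more than a restatement of $i\mid(r-1)$, and this divisibility is forced by Fermat's little theorem together with the elementary fact that any exponent killing an element is a multiple of its multiplicative order. By definition, $r\in R_i(a)$ means precisely $e(r,a)=i$, so the whole argument amounts to comparing the order $e(r,a)$ with the order $r-1$ of the group of units modulo $r$.

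Concretely, I would first treat the main case in which $r$ is odd. Here $r\nmid a$ (the quantity $e(r,a)$ is only defined when $r\nmid a$, so a primitive prime divisor never divides $a$), and Fermat's little theorem gives $a^{r-1}\equiv 1\pMod r$. Since $e(r,a)=i$ is by definition the least positive exponent with $a^{i}\equiv 1\pMod r$, the standard fact that every exponent annihilating $a$ modulo $r$ is a multiple of $i$ yields $i\mid(r-1)$. Writing $r-1=ik$ gives $r=ik+1$. For the bound $k\geq 1$, note that $r\geq 3$ forces $r-1>0$, and the smallest positive multiple of $i$ is $i$ itself, so $r-1\geq i$ and hence $k=(r-1)/i\geq 1$. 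This disposes of every odd primitive prime divisor.

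The one point that does not come for free from Fermat's theorem, and which I expect to be the only real subtlety, is the boundary case $r=2$: here $e(2,a)$ is fixed by the ad hoc convention ($e(2,a)=1$ or $2$ according to $a\bmod 4$) rather than by a genuine multiplicative order, so it must be checked directly against the definition. Since $a$ is odd we have $e(2,a)\in\{1,2\}$, so $2\in R_i(a)$ can only occur for $i\in\{1,2\}$, and for $i=1$ the formula holds with $k=1$. In the applications the relevant primitive prime divisors are odd, so the clean Fermat argument of the previous paragraph is what carries the weight, and the $r=2$ instance is handled by inspection rather than by any further machinery.
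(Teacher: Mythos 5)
Your argument is correct and is essentially the paper's own proof: the paper offers nothing beyond the remark that the lemma ``follows from Fermat's little theorem,'' and your main case spells that remark out --- for odd $r\in R_i(a)$ one has $r\nmid a$, Fermat gives $a^{r-1}\equiv 1\pmod r$, hence the multiplicative order $i=e(r,a)$ divides $r-1$, so $r=ik+1$ with $k=(r-1)/i\geq 1$.

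The one thing you should state plainly, rather than hedge, is what happens at $r=2$, $i=2$. Under the paper's convention, $e(2,a)=2$ exactly when $a\equiv 3\pmod 4$, so $2\in R_2(a)$ for such $a$; the conclusion then reads $2=2k+1$, which no integer $k$ satisfies. So ``inspection'' does not handle this case --- it refutes it: the lemma, read literally with the paper's definition of $e(2,\cdot)$, is false at $(r,i)=(2,2)$. This is a defect of the statement (shared by the paper's one-line justification), not of your Fermat argument, and it is harmless in context: every application in the paper concerns $i\geq 3$ (in fact $i=n-1\geq 6$ or $i=m-1$ with $m$ prime, $m\ge 3$), and for $i\geq 3$ the convention $e(2,a)\in\{1,2\}$ forces every member of $R_i(a)$ to be odd, so your main case is the only one needed. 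Saying this explicitly --- the statement should be read for $i\geq 3$, or equivalently for odd $r$ --- is the accurate way to close your proof, instead of asserting that the $r=2$ instance checks out by inspection.
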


For $i\geq 3$, the greatest primitive divisor $k_i(a)$ of the number $a^i-1$ is defined as the product of $(a^i-1)_r$ over all $r\in R_i(a)$ (we will not consider the greatest primitive divisors $k_1(q)$ and $k_2(q)$). As usual, $\varphi(x)$ and $\Phi_i(x)$ denote Euler's totient function and the $i$-th cyclotomic polynomial, respectively. Let $F(m)$ denote the sum $\sum_{i=1}^{m}\varphi(i)$.

\begin{lemma}\label{l:23}
If $3\leq m\leq 24$, then $\prod_{i=1}^{m}\Phi_i(\pm u)>u^{F(m)-2}$ for every $u\geq 2$.
\end{lemma}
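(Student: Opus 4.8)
The plan is to read the inequality as a comparison between two polynomials in $u$. Since $\deg\Phi_i=\varphi(i)$ and each $\Phi_i$ is monic, the quantity $P_m^{\pm}(u):=\prod_{i=1}^m\Phi_i(\pm u)$ is, up to sign, a monic polynomial of degree $\sum_{i=1}^m\varphi(i)=F(m)$, so the statement asks that this degree-$F(m)$ quantity exceed $u^{F(m)-2}$, i.e.\ that only a factor $u^2$ be lost against the leading term. First I would dispose of the signs. For $u\ge 2$ the only negative factors are $\Phi_1(-u)=-(u+1)$ and $\Phi_2(-u)=-(u-1)$, whose product is $u^2-1>0$, while $\Phi_i(-u)>0$ for $i\ge 3$, so $P_m^{\pm}(u)>0$. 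Moreover the standard identities $\Phi_n(-u)=\Phi_{2n}(u)$ for odd $n\ge 3$, $\Phi_n(-u)=\Phi_{n/2}(u)$ for $n\equiv 2\ (\mathrm{mod}\ 4)$, and $\Phi_n(-u)=\Phi_n(u)$ for $4\mid n$, rewrite every factor $\Phi_i(-u)$ as $\Phi_{j(i)}(u)$ with $\varphi(j(i))=\varphi(i)$. Thus both the linear and unitary cases reduce to bounding from below a product of cyclotomic values at the positive argument $u$ of total degree $F(m)$; writing $\rho_i=\Phi_i(\pm u)/u^{\varphi(i)}$, it suffices to prove $\prod_{i=1}^m\rho_i>u^{-2}$.

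Next I would estimate the factors individually, exploiting that every index here satisfies $i\le 24$, well below $105$, so all coefficients of $\Phi_i$ lie in $\{-1,0,1\}$. Writing $\Phi_n(u)=u^{\varphi(n)}+\sum_{k<\varphi(n)}c_ku^k$ with $c_k\in\{-1,0,1\}$, the tail is controlled by $\sum_{k<\varphi(n)}u^k<2u^{\varphi(n)-1}$ for $u\ge 2$; combined with the value $-\mu(n)$ of the subleading coefficient (where $\mu$ is the M\"obius function), this yields, for each concrete $n\le 24$, a sharp bound of the shape $u^{\varphi(n)}-u^{\varphi(n)-1}\le\Phi_n(u)\le u^{\varphi(n)}+u^{\varphi(n)-1}$, and for the prime indices the genuine surplus $\Phi_p(u)\ge u^{\varphi(p)}+u^{\varphi(p)-1}$. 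Tabulating which $\rho_i$ fall below $1$ (the deficient factors) and which exceed $1$ (the surplus, coming mainly from prime indices and, after the reindexing above, from their images), I obtain an explicit lower bound for $\prod_i\rho_i$ as a product of short rational functions of $u$.

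Finally I would prove $\prod_i\rho_i>u^{-2}$ by splitting on $u$. For $u=2$ the number $P_m^{\pm}(2)$ is an explicit integer, and I would verify the $22$ linear and $22$ unitary cases $3\le m\le 24$ by direct computation, checking $P_m^{\pm}(2)>2^{F(m)-2}$; here the surplus contributed by the prime indices (for $+u$) or by the reindexed composite factors (for $-u$) outweighs the deficiency. For $u\ge 3$ the per-factor slack is larger — each deficient $\rho_i$ loses at most a factor $u/(u+1)$ while each surplus factor gains at least $1+u^{-1}$ — so the bounds of the previous step close with room to spare. The main obstacle is the genuine tightness of the statement in the unitary case: one has $\prod_{i=1}^m\Phi_i(-u)<u^{F(m)}$ for several of these $m$ (for instance $m=13$), so the exponent cannot be raised from $F(m)-2$ to $F(m)$, and at $u=2$ near $m=13$ the margin is only about a factor $2^{0.6}$. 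Consequently the crude estimates $(u-1)^{\varphi(i)}\le\Phi_i(u)\le(u+1)^{\varphi(i)}$ are hopelessly lossy, and the real work is the careful bookkeeping showing that the prime-index surplus provably dominates the deficiency of the remaining factors, uniformly over the finite range $3\le m\le 24$ and both signs.
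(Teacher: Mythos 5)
Your structural reductions are all sound and match the spirit of the paper's treatment (the paper itself gives no argument beyond the one-line remark that the inequality is straightforward to verify, i.e.\ it is treated as a finite routine check): the sign analysis, the reindexing $\Phi_n(-u)=\Phi_{2n}(u)$, $\Phi_{n/2}(u)$, $\Phi_n(u)$ preserving $\varphi$, the normalization to $\prod_i\rho_i>u^{-2}$, and your identification of the tight case (unitary, $m=13$, $u=2$, margin about $2^{0.6}$) are all correct. The problem is that the two quantitative estimates on which your proof actually rests are both false, and one of them is the sole mechanism covering the infinite range $u\ge3$.

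First, the per-factor bounds do not follow from your stated derivation: coefficients in $\{-1,0,1\}$ give only $\Phi_n(u)>u^{\varphi(n)}-2u^{\varphi(n)-1}$, which at $u=2$ degenerates to the vacuous $\Phi_n(2)>0$; the bound you then use, $\Phi_n(u)\ge u^{\varphi(n)}-u^{\varphi(n)-1}$, is true for $n\ge 2$ but needs different input, e.g.\ the factorization $\Phi_n(u)=\prod_{d\mid n}(u^d-1)^{\mu(n/d)}$ (and your accompanying two-sided bound is false for prime $n$, as your own next sentence concedes). Second, and more seriously, the claim closing the case $u\ge 3$ --- ``each deficient $\rho_i$ loses at most a factor $u/(u+1)$'' --- is simply wrong: $\rho_1=(u-1)/u<u/(u+1)$ for every $u$, and $\rho_{15}=\Phi_{15}(u)/u^{8}$ equals $151/256<2/3$ at $u=2$ and $4561/6561<3/4$ at $u=3$ (likewise $\rho_{21}$, $\rho_{33}$, \dots). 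So your argument proves the statement only at $u=2$, where you compute directly, and never for the remaining infinitely many $u$; that is a genuine gap, since nothing in the write-up shows the margin cannot shrink below $u^{-2}$ as $u$ grows. A clean repair in your own framework: from the M\"obius factorization, $\prod_{i=1}^m\Phi_i(u)=\prod_{d=1}^m(u^d-1)^{M(\lfloor m/d\rfloor)}$, where $M$ is the Mertens function and $M(k)\le 0$ for $2\le k\le 24$; extracting the power of $u$ from each factor with negative exponent and discarding the remaining factor (which is $\ge1$) gives $\prod_{i=1}^m\Phi_i(u)\ge u^{F(m)}\prod_{m/2<d\le m}(1-u^{-d})\ge u^{F(m)}/2$, which settles the linear case for all $u\ge 2$ at once; the unitary analogue, with $u^d+1$ in place of $u^d-1$ for odd $d$, yields a lower bound for $\prod_i\rho_i$ each of whose factors is increasing in $u$, so that a single verification at $u=2$ for each $m$ rigorously covers all $u\ge 2$.
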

\begin{proof}
The inequality is straightforward to verify.
\end{proof}

\begin{lemma}\label{l:kiPhi}
Let $a$ and $i$ be integers, $|a|\geq 2$ and $i\geq 3$.
Let $r$ be the greatest prime divisor of $i$ and $l = (i)_{r'}$. Then  
$$k_i(a) = \dfrac{|\Phi_i(a)|}{(r,\Phi_l(a))}.$$
Moreover, if $l$ does not divide $r-1$, then $(r, \Phi_l(a)) = 1$.
\end{lemma}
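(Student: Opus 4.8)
The plan is to extract the prime factorisation of $\Phi_i(a)$ and split its prime divisors into the primitive prime divisors of $a^i-1$ and the non-primitive ones, since by definition $k_i(a)$ is precisely the part of $a^i-1$ supported on $R_i(a)$. The starting point is the identity $a^i-1=\prod_{d\mid i}\Phi_d(a)$. First I would note that for $s\in R_i(a)$ one has $(a^i-1)_s=(\Phi_i(a))_s$: since $e(s,a)=i$, the prime $s$ divides no $\Phi_d(a)$ with $d\mid i$, $d<i$, so the entire $s$-part of $a^i-1$ lies in $\Phi_i(a)$. Hence $k_i(a)$ equals the product of $(\Phi_i(a))_s$ over $s\in R_i(a)$, i.e.\ the primitive part of $|\Phi_i(a)|$, and the whole problem reduces to identifying the complementary, non-primitive part of $|\Phi_i(a)|$.

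Next I would determine which primes divide $\Phi_i(a)$ without being primitive. Such an $s$ has $e(s,a)<i$ and necessarily divides $i$ --- the standard behaviour of prime divisors of cyclotomic values, which is deducible from Lemma~\ref{l:r-part} (and for $s=2$ reflects the fact that $2\mid\Phi_i(a)$ forces $i$ to be a power of $2$). By Lemma~\ref{ri_structure} primitive primes exceed $i$ and therefore cannot divide $i$, keeping the two classes disjoint. For an odd prime $s\mid i$, Fermat gives $e(s,a)\mid s-1$, so $(i)_{s'}=e(s,a)\le s-1<s$; if $s<r$ this is impossible, because $r\mid(i)_{s'}$ would force $(i)_{s'}\ge r>s$, and when $i$ has an odd prime factor the prime $2$ is excluded outright. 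Consequently the only possible non-primitive prime divisor of $\Phi_i(a)$ is the largest prime divisor $r$ of $i$: for odd $r$ it occurs exactly when $e(r,a)=(i)_{r'}=l$, and for $r=2$ (i.e.\ $i$ a power of $2$, with $l=1$) exactly when $a$ is odd.

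The third step is to compute the multiplicity of $r$ in $\Phi_i(a)$ and to rewrite it as $(r,\Phi_l(a))$. Writing $i=l\cdot r^{m}$ with $m=v_r(i)\ge1$: for odd $r$ with $e(r,a)=l$ one has $a^l\equiv1\pMod r$, so Lemma~\ref{l:r-part} gives $(a^{lr^{j}}-1)_r=r^{j}(a^l-1)_r$, and telescoping over $j$ shows $(\Phi_{lr^{j}}(a))_r=r$ for each $j\ge1$; in particular $r$ divides $\Phi_i(a)$ exactly once. For $r=2$ and $a$ odd one checks directly that $a^{2^{m-1}}+1\equiv2\pMod4$, again giving multiplicity one. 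Thus $(r,\Phi_i(a))\in\{1,r\}$ and $|\Phi_i(a)|=k_i(a)\cdot(r,\Phi_i(a))$. It remains to replace $(r,\Phi_i(a))$ by $(r,\Phi_l(a))$, which amounts to the equivalence $r\mid\Phi_i(a)\Leftrightarrow r\mid\Phi_l(a)$: for odd $r$ both sides are equivalent to $e(r,a)=l$ (applying the previous characterisation to the index $l$, for which $(l)_{r'}=l$), and for $r=2$ both $2\mid\Phi_i(a)$ and $2\mid\Phi_1(a)=a-1$ hold exactly when $a$ is odd. This yields $k_i(a)=|\Phi_i(a)|/(r,\Phi_l(a))$.

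For the ``moreover'' clause, suppose $(r,\Phi_l(a))=r$. Since $r=2$ forces $l=1\mid r-1$, we may assume $r$ is odd; then $r\mid\Phi_l(a)$ gives $e(r,a)=l$, whence $l=e(r,a)\mid r-1$ by Fermat. Contrapositively, if $l\nmid r-1$ then $(r,\Phi_l(a))=1$. I expect the main obstacle to lie in the second and third steps: rigorously excluding every non-primitive prime below $r$ and verifying that $r$ divides $\Phi_i(a)$ with multiplicity exactly one, together with the separate and slightly delicate treatment of $r=2$, where the convenient ``$e(r,a)=l$'' criterion fails yet the final identity still holds.
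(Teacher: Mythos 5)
Your proof is correct, but it differs from the paper in kind rather than in tactics: the paper does not prove Lemma~\ref{l:kiPhi} at all --- it simply cites \cite[Proposition 2]{97Roi} --- whereas you reconstruct a self-contained argument. Your argument is the standard one underlying the cited result: identify $k_i(a)$ with the primitive part of $|\Phi_i(a)|$ via $(a^i-1)_s=(\Phi_i(a))_s$ for $s\in R_i(a)$; show that the only possible non-primitive prime divisor of $\Phi_i(a)$ is the largest prime divisor $r$ of $i$; prove it occurs with multiplicity at most one (the telescoping via Lemma~\ref{l:r-part} for odd $r$, and $a^{i/2}+1\equiv 2\pMod 4$ for $r=2$); and convert its occurrence into the condition $r\mid\Phi_l(a)$, from which the ``moreover'' clause follows by Fermat. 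All of these steps are sound, including the delicate $r=2$ case, where the paper's convention on $e(2,a)$ guarantees $2\notin R_i(a)$ for $i\geq 3$, so parity causes no clash with primitivity. The one step you lean on without proof is the assertion that a non-primitive prime divisor $s$ of $\Phi_i(a)$ satisfies $e(s,a)=(i)_{s'}$, not merely $s\mid i$; both your exclusion of primes $s<r$ and the replacement of $(r,\Phi_i(a))$ by $(r,\Phi_l(a))$ rest on this stronger form. It does follow from Lemma~\ref{l:r-part}: if a prime $t\neq s$ divided $i/e$ with $e=e(s,a)$, then $\Phi_i(a)$ would divide $(a^i-1)/(a^{i/t}-1)$, whose $s$-part is trivial because $(i/e)_s=(i/(te))_s$ gives $v_s(a^i-1)=v_s(a^{i/t}-1)$; this half-page is worth writing out if you want the proof to be genuinely self-contained. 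In the end, the citation buys the paper brevity; your version buys independence from \cite{97Roi} and makes explicit the multiplicity-one phenomenon that is exactly what allows passing from $\Phi_i(a)$ to $k_i(a)$.
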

\begin{proof}
See~\cite[Proposition 2]{97Roi}. 
\end{proof}

Denote by $\pi(G)$ the set of prime divisors of the order of $G$. The set of element orders of $G$ that are maximal under divisibility is denoted by $\mu(G)$. The exponent of $G$ is denoted by $\exp(G)$. The spectra of finite simple groups of Lie type are known, and references to the corresponding papers can be found in \cite{18But.t} (see also \cite[Lemma 2.3]{16Gr.t} for misprints in the description of the spectra of $O_{2n}^\pm(q)$).

\begin{lemma}\label{l:exponents}
Let $S$ be a nonabelian simple classical group over the field of characteristic $v$ and order $u$. 

\begin{enumerate}
    \item[(a)] Let $S = L_n^{\varepsilon}(u)$, where $\varepsilon \in \{+,-\}$, $n \geq 3$, and let $v^l$ be the smallest power of $v$ greater than $n-1$. Put $c = r$ if $r \in \pi(u - \varepsilon)$ and $n = r^s$, and $c = 1$ otherwise. Then
    \[
        \exp(S) = \frac{v^l}{c} \prod_{i=1}^n \Phi_i(\varepsilon u) > u^{3F(n)/4}.
    \]
    \item[(b)] Let $S$ be one of $S_{2n}(u)$, $O_{2n+1}(u)$, where $n \geq 2$, or $O_{2n+2}^+(u)$, where $n \geq 3$ is odd, or $O_{2n}^-(u)$, where $n \geq 4$ is even. Let $v^l$ be the smallest power of $v$ greater than $2n-3$ in the latter case and $2n-1$ otherwise. Put $c = (2, u-1)^2$ if $n = 2^s$ and $c = (2, u-1)$ if  $n \ne 2^s$. Then
\[
\exp(S) = \frac{v^l}{c} \prod_{i=1}^n \Phi_i(u^2) > u^{3F(n)/2}.
\]

\item[(c)] Let $S = O_{2n}^{\varepsilon}(u)$, where $\varepsilon \in \{+,-\}$, $n \geq 5$ is odd, and let $v^l$ be the smallest power of $v$ greater than $2n-3$. Then
\[
\exp(S) = \frac{v^l}{(2, u-1)} \Phi_n(\varepsilon u) \prod_{i=1}^{n-1} \Phi_i(u^2) > u^{(3F(n) + 3F(n-1))/4}.
\]
\end{enumerate}
\end{lemma}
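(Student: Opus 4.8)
The plan is to compute the exponent as the product of its $v$-part and its $v'$-part. Every element of $S$ has order equal to the product of the order of its unipotent component and the order of a commuting semisimple component, the first being a power of $v$ and the second coprime to $v$; since these two factors are coprime across all elements, $\exp(S)=(\exp S)_v\cdot(\exp S)_{v'}$, where $(\exp S)_v$ is the largest power of $v$ occurring as an element order and $(\exp S)_{v'}$ is the least common multiple of the orders of the semisimple elements. I would read both quantities off the known description of $\omega(S)$, relying on the references gathered in \cite{18But.t} together with the corrections in \cite[Lemma 2.3]{16Gr.t}, and then establish the lower bounds via Lemma~\ref{l:23}.

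For the $v$-part, the largest power of $v$ arising as an element order is the order of a regular unipotent element, namely the smallest power of $v$ exceeding the size of the largest Jordan block admissible on the natural module. I would verify case by case that this block has the size dictating the stated threshold; because $v$ is odd, no power of $v$ is even, so each of the thresholds $n-1$, $2n-1$, $2n-3$ yields the same $v^l$ as the naive block bound, and the three formulas for the $v$-part agree. Note that passing to the simple quotient does not affect this factor, since the center has order coprime to $v$; the corrections $c$, $(2,u-1)$, and $(2,u-1)^2$ therefore divide the \emph{semisimple} product rather than $v^l$.

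The heart of the computation is the $v'$-part. For $GL_n^{\varepsilon}(u)$ a semisimple element is determined by the degrees $d_1,\dots,d_k$ (with $\sum d_j=n$) of the irreducible factors of its characteristic polynomial, and its order divides $\operatorname{lcm}\{u^{d_j}-\varepsilon^{d_j}\}$, this bound being attained on a single $d$-dimensional cyclic (Singer) torus. Taking the least common multiple over all $d\le n$ and using $u^d-\varepsilon^d=\pm\prod_{i\mid d}\Phi_i(\varepsilon u)$ yields $\prod_{i=1}^{n}\Phi_i(\varepsilon u)$; the parallel analysis for the symplectic and orthogonal groups, where eigenvalues occur in pairs $\lambda,\lambda^{-1}$ and the relevant tori have orders built from $u^{2i}-1$, produces the factors $\Phi_i(u^2)$, together with the single twisted factor $\Phi_n(\varepsilon u)$ coming from the Coxeter torus of order $u^{n}-\varepsilon$ in case~(c). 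The delicate point---and the step I expect to be the main obstacle---is passing from $SL_n^{\varepsilon}(u)$ (respectively, the full isometry group) to the simple quotient: factoring out the center of order $(n,u-\varepsilon)$ can lower the top $r$-part of a torus element by exactly one factor of $r$, and a case analysis using Lemmas~\ref{l:r-part} and~\ref{l:kiPhi} shows that this drop affects the exponent precisely when $n=r^s$ with $r\mid u-\varepsilon$, which is the origin of the correction factor $c$ in (a); the factors $(2,u-1)$ and $(2,u-1)^2$ in (b) arise in the same way from the center at the prime $r=2$, with Lemma~\ref{l:r-part} controlling the $2$-part.

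Finally, the lower bounds are a routine consequence of Lemma~\ref{l:23}. Since $\deg\prod_{i=1}^{n}\Phi_i=F(n)$, that lemma gives $\prod_{i=1}^{n}\Phi_i(\varepsilon u)>u^{F(n)-2}$ and, after the substitution $u\mapsto u^2$, $\prod_{i=1}^{n}\Phi_i(u^2)>u^{2F(n)-4}$; combining these with $v^l\ge n$ (so that $v^l/c\ge1$, as $c\le n$) reduces each inequality to an elementary estimate. For the ranks relevant to $\mathcal L$ one has $F(n)>8$, whence $F(n)-2>3F(n)/4$ and likewise $2F(n)-4>3F(n)/2$, so the bounds in (a) and (b) follow at once; in (c) I would additionally use the identity $F(n)=F(n-1)+\varphi(n)$ together with a lower bound for $\Phi_n(\varepsilon u)$ to absorb the twisted factor. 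The few small ranks where Lemma~\ref{l:23} is too lossy I would dispatch by a direct computation, since the constants $3/4$ and $3/2$ are chosen to leave ample slack once the exponent formula itself is in hand.
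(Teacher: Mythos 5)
Your overall architecture is the right one --- and, notably, it is essentially the argument behind the paper's own ``proof,'' which consists solely of the citation to \cite[Lemma 3.5]{19GrVasZv}: split $\exp(S)$ into its $v$-part and $v'$-part via Jordan decomposition, read the $v$-part off the largest admissible Jordan block, obtain the $v'$-part from the cyclotomic factorization of the orders of maximal tori, and track the loss through the center to produce the correction factors. One caveat on the exponent formula: your justification of the $v$-part leans on ``because $v$ is odd,'' but the lemma permits $v=2$ (it is applied to groups $S$ of arbitrary characteristic, and the factors $(2,u-1)$ are written precisely so that even $u$ is covered). In characteristic $2$ the admissible Jordan partitions for symplectic and orthogonal groups are different --- e.g.\ the regular unipotent element of $O^{\pm}_{2n}(u)$ with $u$ even has blocks $(2n-2,2)$ rather than $(2n-1,1)$ --- so the identification of $v^l$ needs a separate (if easy) verification there; the stated thresholds do come out correct, but not by your parity argument.

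The genuine gap is in the lower bounds. You derive them from Lemma~\ref{l:23}, which is only stated for $3\le m\le 24$, and you then restrict to ``the ranks relevant to $\mathcal L$.'' But $n$ in this lemma is the rank of $S$, not of $L$, and it is a priori unbounded: the entire purpose of the inequalities $\exp(S)>u^{3F(n)/4}$, as they are used in the proof of Lemma~\ref{S_bounds}, is that $\log_u(\exp(S))/n \ge 3F(n)/(4n)\to\infty$ forces the rank of $S$ to be bounded. Invoking the inequality only for bounded ranks is therefore circular --- it assumes exactly what the inequality exists to prove --- and as written your argument establishes the lemma only for $n\le 24$, which is strictly weaker than what the paper needs. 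The gap is fixable by an estimate valid for all $n$: from $\Phi_i(u)=u^{\varphi(i)}\prod_{d\mid i}(1-u^{-d})^{\mu(i/d)}$ one gets $|\Phi_i(\pm u)|\ge u^{\varphi(i)}\prod_{k\ge1}(1-u^{-k})>u^{\varphi(i)-2}$ for $u\ge 2$, hence $\prod_{i=1}^{n}|\Phi_i(\varepsilon u)|>u^{F(n)-2n}$; since $F(25)=200=8\cdot 25$ and $\varphi(m)\ge 8$ for $m\ge 26$, we have $F(n)\ge 8n$, i.e.\ $F(n)-2n\ge 3F(n)/4$, for all $n\ge 25$, and the same estimate after the substitution $u\mapsto u^2$ handles parts (b) and (c). With that supplement (plus your acknowledged direct checks at $n=3,4$, where $F(n)\le 8$ makes Lemma~\ref{l:23} insufficient), the proof closes; without it, it does not.
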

\begin{proof}
	See~\cite[Lemma 3.5]{19GrVasZv}.
\end{proof}

\begin{lemma}\label{L_spectra}
	Let $S=L^\varepsilon_m(q)$, where $m\geq2$, let $q$ be a power of a prime $p$. Put $d=(m,q-\varepsilon)$. Then $\omega(S)$ is exactly the set of divisors of the following numbers:
	\begin{enumerate}
		\item[(a)]  $\dfrac{q^m-\varepsilon^m}{d(q-\varepsilon)}$;
		\item[(b)]  $\dfrac{[q^{m_1}-\tau^{m_1},q^{m_2}-\tau^{m_2}]}{(m/(m_1,m_2),q-\tau)}$ for $m_1,m_2>0$ such that $m_1+m_2=m$;
		\item[(c)] $[q^{m_1}-\varepsilon^{m_1},\dots,q^{m_s}-\varepsilon^{m_s}]$ for $s\geq3$ and $m_1,\dots,m_s>0$ such that $m_1+\dots+m_s=m$;
		\item[(d)] $p^k\dfrac{q^{m_1}-\varepsilon^{m_1}}{d}$ for $k,m_1>0$ such that $p^{k-1}+1+m_1=m$;
		\item[(e)] $p^k[q^{m_1}-\varepsilon^{m_1},\dots,q^{m_s}-\varepsilon^{m_s}]$ for $s\geq2$ and $k,m_1,\dots,m_s>0$ such that $p^{k-1}+1+m_1+\dots+m_s=m$;
		\item[(f)] $p^k$, if $p^{k-1}+1=m$ for $k>0$.
	\end{enumerate}
\end{lemma}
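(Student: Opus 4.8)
The plan is to compute $\omega(L^\varepsilon_m(q))$ by descending the chain $\mathrm{GL}^\varepsilon_m(q)\geq \mathrm{SL}^\varepsilon_m(q)\twoheadrightarrow L^\varepsilon_m(q)$, where $\mathrm{GL}^{+}=\mathrm{GL}$, $\mathrm{GL}^{-}=\mathrm{GU}$, and the last map has kernel the center $Z\cong C_d$ with $d=(m,q-\varepsilon)$. Since $\omega(L^\varepsilon_m(q))$ is the set of divisors of its maximal-under-divisibility elements, it suffices to produce, for each admissible ``shape'', one element of maximal order of that shape and to determine its order modulo $Z$.

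First I would read off element orders in $\mathrm{GL}^\varepsilon_m(q)$ from the multiplicative Jordan decomposition $g=su=us$. The semisimple part $s$ lies in a maximal torus; these are indexed by (signed, in the unitary case) partitions $m=m_1+\dots+m_s$ and have order $\prod_i\bigl(q^{m_i}-\varepsilon^{m_i}\bigr)$, so the attainable orders of $s$ are exactly the numbers $[\,d_1,\dots,d_s\,]$ with $d_i\mid q^{m_i}-\varepsilon^{m_i}$. The unipotent part $u$ commutes with $s$ and acts within the generalized eigenspaces; a unipotent element whose largest Jordan block has size $j$ has order equal to the least power $p^{k}$ of $p$ with $p^{k}\geq j$, so order exactly $p^{k}$ forces a block of size at least $p^{k-1}+1$, occupying $p^{k-1}+1$ coordinates. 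Packing at most one such block together with the semisimple blocks inside the $m$ available coordinates gives the $\mathrm{GL}^\varepsilon$-analogue of items (a)--(f): every element order divides $p^{k}\cdot[\,q^{m_1}-\varepsilon^{m_1},\dots,q^{m_s}-\varepsilon^{m_s}\,]$, where for $k\geq 1$ the unipotent block imposes $(p^{k-1}+1)+m_1+\dots+m_s\leq m$ and for $k=0$ the constraint is simply $m_1+\dots+m_s\leq m$.

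The main work, and the step I expect to be the principal obstacle, is the passage through $\det=1$ and then through the central quotient, since this is exactly where the denominators $q-\varepsilon$, $d$, and $(m/(m_1,m_2),q-\tau)$ are created, and how much order is lost depends sharply on the shape. Imposing $\det=1$ cuts a torus element $t=(t_1,\dots,t_s)$ by one norm relation $\prod_i N(t_i)=1$, while the image in $L^\varepsilon_m(q)$ further divides out the scalars of determinant $1$, a cyclic group of order $d$. When $s\geq 3$ there are enough independent scalar and norm degrees of freedom to realize a determinant-$1$ representative whose image still has the full order $[\,q^{m_1}-\varepsilon^{m_1},\dots,q^{m_s}-\varepsilon^{m_s}\,]$: this is case (c), with no denominator. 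At the other extreme, a single Singer-type block ($s=1$, filling all $m$ coordinates) has no spare freedom, so one loses a factor $q-\varepsilon$ to $\det=1$ and a further factor $d$ to the center, giving (a). The two-block case $s=2$ is genuinely borderline: exactly one scalar correction survives, leaving the sharp denominator $(m/(m_1,m_2),q-\tau)$ in (b), where the sign $\tau\in\{+,-\}$ records the two kinds of rank-two torus that must be treated. Cases (d)--(f) repeat the analysis after attaching the unipotent block of size $p^{k-1}+1$; here the scalar carried by that block supplies an extra degree of freedom for solving $\det=1$ without sacrificing the factor $q^{m_1}-\varepsilon^{m_1}$, which is why (d) carries only the central denominator $d$ rather than the full $d(q-\varepsilon)$ of (a), and why the pure-unipotent case (f) is unconstrained.

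To make the borderline computations rigorous I would work inside the preimage torus in $\mathrm{GL}^\varepsilon_m(q)$, express the determinant as an explicit norm, and then determine the exact order of the image in $\mathrm{SL}^\varepsilon_m(q)/Z$ one prime at a time, using Lemma \ref{l:r-part} to track the relevant $r$-parts and to confirm that every prime power dividing the claimed numbers is attained while no larger one is. The structural argument above shows these numbers are simultaneously upper bounds for the element orders of each shape and are realized; taking all shapes and closing under divisibility yields $\omega(L^\varepsilon_m(q))$. Finally I would cross-check the resulting list against the published spectra collected via \cite{18But.t}.
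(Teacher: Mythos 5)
Your strategy is, in outline, the right one---indeed it is essentially the strategy of the source that the paper itself relies on: the paper does not prove this lemma at all, its proof being the citation to \cite[Corollary 3]{08But.t}, where the spectrum is computed exactly along the route you describe (Jordan decomposition, exponents of maximal tori, then descent through $\mathrm{SL}^\varepsilon_m(q)$ and the central quotient). The problem is that your write-up defers precisely the content that makes this a theorem: the sharp determination of how much order is lost at each of the two descents for each shape, namely exactly $d(q-\varepsilon)$ when $s=1$, exactly $(m/(m_1,m_2),q-\varepsilon)$ when $s=2$, exactly $d$ in case (d), and nothing at all when $s\geq 3$ or in case (e). These are asserted via ``enough independent scalar and norm degrees of freedom'' together with a promise to verify the borderline cases ``one prime at a time'' using Lemma \ref{l:r-part}, but none of the computations is carried out, and they are not routine: for instance, realizing the full least common multiple $[q^{m_1}-\varepsilon^{m_1},\dots,q^{m_s}-\varepsilon^{m_s}]$ in the simple group for $s\geq 3$ requires exhibiting a determinant-one element whose image modulo the scalar subgroup of order $d$ still has full order, which is an arithmetic statement about the subgroup $T\cap\mathrm{SL}^\varepsilon_m(q)$ of a maximal torus $T$ and its image in the quotient, not a count of parameters. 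Since the lemma is a published theorem whose proof occupies a separate article, as a standalone argument your proposal is a plan rather than a proof; the appropriate proof here is the citation the paper gives.

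Two concrete points would also need repair before the deferred computations could even begin. First, you read the sign $\tau$ in item (b) as a free parameter ``recording the two kinds of rank-two torus''. It cannot be read this way: for $\varepsilon=+$ every maximal torus of $\mathrm{GL}_m(q)$ has exponent of the form $[q^{m_1}-1,\dots,q^{m_s}-1]$, and a number such as $[q^{m_1}+1,\,q^{m_2}-(-1)^{m_2}]$ with suitable odd $m_2$ involves primitive prime divisors of $q^{2m_2}-1$ that need not divide $|L_m(q)|$ at all (for $m=5$, $m_1=2$, $m_2=3$, the primes in $R_6(q)$ do not divide $|\mathrm{GL}_5(q)|$); so $\tau$ in (b) must be read as $\varepsilon$ (a typo in the statement), and your interpretation would make the claim false in the linear case. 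Second, in the unitary case the centralizer of a semisimple element is a product of groups $\mathrm{GU}_{a_i}(q^{b_i})$ and $\mathrm{GL}_{c_j}(q^{d_j})$ coming from Frobenius-paired eigenvalue orbits, not a collection of blocks indexed by a ``signed partition'', and your packing bound must also subsume shapes you never list, such as a unipotent block of size $p^{k-1}+1$ inside a factor over $\mathbb{F}_{q^{b}}$, which occupies $b(p^{k-1}+1)$ coordinates and yields order $p^{k}(q^{b}-\varepsilon^{b})$; this is covered by items (d)--(e) only after one checks $b(p^{k-1}+1)\geq p^{k-1}+1+b$ for $b\geq 2$, exactly the kind of verification the sketch passes over.
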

\begin{proof}
See \cite[Corollary 3]{08But.t}.    
\end{proof}

\begin{lemma}\label{MAGr_lemma} Let $n$ be a prime, $n\geqslant 5$, let $q$ be a power of an odd prime $p$ and $a=\frac{q^n+1}{(q+1)(n,q+1)}-1$. Then the following statements hold:
	
	\begin{enumerate}
		\item[(a)] if $n$ divides $q+1$, then $(q+1)/n$  divides $a$  and $(a)_n=(q+1)_n$;
		\item[(b)] $a\not\in\omega(U_n(q))$.
	\end{enumerate}
	
\end{lemma}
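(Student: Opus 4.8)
The plan is to prove (a) by a direct $n$-adic computation and then to derive (b) from it together with the description of $\omega(U_n(q))$ in Lemma~\ref{L_spectra}, where $U_n(q)=L_n^-(q)$. For (a) I assume $n\mid q+1$, so $(n,q+1)=n$ and $a+1=\frac1n\cdot\frac{q^n+1}{q+1}$. Setting $s=-q$ and $w=s-1=-(q+1)$ I have $\frac{q^n+1}{q+1}=\sum_{i=0}^{n-1}s^i=\frac{s^n-1}{s-1}$; since $s\equiv1\pmod n$, Lemma~\ref{l:r-part} gives $(s^n-1)_n=n\,(s-1)_n=n\,(q+1)_n$, so $n$ divides $\frac{q^n+1}{q+1}$ exactly once and $a+1$ is an integer prime to $n$. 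Expanding $s^i=(1+w)^i$ and summing yields $a=\sum_{j=1}^{n-1}\frac1n\binom{n}{j+1}w^j$, where each $\frac1n\binom{n}{j+1}$ is an integer for $j\le n-2$. The term $j=1$ equals $\frac{n-1}{2}w$ and its $n$-adic valuation is exactly that of $q+1$, while for $n\ge5$ every other term has strictly larger $n$-adic valuation; hence $(a)_n=(q+1)_n$. As every term is visibly divisible by $(q+1)/n$, so is $a$.

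For (b) the starting points are that $a+1=\frac{q^n+1}{(q+1)(n,q+1)}$ is exactly the number of type~(a) in Lemma~\ref{L_spectra} (so $a$ and $a+1$ are coprime) and that $n\mid a$ in all cases: if $n\nmid q+1$ then $(n,q+1)=1$, and reducing $(q+1)(a+1)=q^n+1$ modulo $n$ (with $q^n\equiv q$, or $q\equiv0$ when $n=p$) and cancelling the unit $q+1$ gives $a+1\equiv1$, i.e.\ $n\mid a$; if $n\mid q+1$ this is part~(a). I also note that when $n\nmid q+1$ one has $(q+1)a=q^n-q=q(q^{n-1}-1)$ with $q+1\mid q^{n-1}-1$, so $q\mid a$. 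Now suppose $a\in\omega(U_n(q))$; then $a$ divides one of the numbers (a)--(f) of Lemma~\ref{L_spectra}, say $N$, and since $n\mid a$ also $n\mid N$. Type~(a) is impossible because then $a\mid(a+1)$ with $\gcd(a,a+1)=1$ and $a\ge n\ge5$.

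The remaining types I would eliminate branching on whether $n\mid q+1$. When $n\mid q+1$: in type~(b) with $\tau=-$ and in type~(d) the denominator equals $n$, and since $(q^{m}-(-1)^{m})_n=(q+1)_n$ for every $m<n$ by Lemma~\ref{l:r-part}, the single division by $n$ gives $(N)_n=(q+1)_n/n<(q+1)_n=(a)_n$, a contradiction; types~(c),(e) carry no such denominator, and here I bound the size, using $q+1\mid q^{m}-(-1)^{m}$ to write the relevant least common multiple as $q+1$ times a factor at most $q^{\sum(m_i-1)}$, so that $N\le(q+1)q^{\,n-3}$, which is smaller than $a$ because $n\mid q+1$ forces $q\ge2n-1$; type~(f), being at most $q(n-1)$, is immediate. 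When $n\nmid q+1$: for the types carrying no $p$-power factor (types (b) and (c)) the relation $q\mid a$ gives $p\mid a$, while every $q^{m}-(-1)^{m}$ is prime to $p$, so $p\nmid N$; types~(e),(f) are killed by the same size bound, now comfortably since $a$ has order $q^{n-1}$.

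The main obstacle is the pair of balanced two-block families for which neither the valuation nor the size comparison applies. The first is type~(b) with $\tau=+$ and $n\mid q+1$: here the denominator $(n,q-1)=1$, and $N=[q^{m_1}-1,q^{m_2}-1]$ satisfies $(N)_n=(q+1)_n=(a)_n$ and is of the same order of magnitude as $a$. Since $N\mid q^{m_1m_2}-1$, it would suffice to prove $a\nmid q^{m_1m_2}-1$, that is, that $a$ has a prime divisor whose multiplicative order modulo $q$ does not divide $m_1m_2$; controlling the prime factorisation of $a$ to this extent, presumably via a Bang--Zsigmondy-type analysis, is where I expect the real work to lie. The second is type~(d) with $n\nmid q+1$ in the prime-field regime $q=p$, $k=1$, where $q\mid a$ makes $(a)_p=p=(N)_p$ and the $n$-adic comparison ties, so one must argue separately (typically that $n\nmid N$, or that the residual $\{p,n\}'$-parts are incompatible).
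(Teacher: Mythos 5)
Your part (a) is correct and is essentially the paper's own argument (expanding $q^n+1$ in powers of $q+1$ and comparing $n$-adic valuations), and for $n\mid q+1$ your valuation-plus-size treatment of the types in Lemma~\ref{L_spectra} matches the paper's. The first problem is that your declared ``main obstacle'' --- type (b) with $\tau=+$ --- is a phantom case. The symbol $\tau$ in item (b) of Lemma~\ref{L_spectra} is not a free sign: it is the same $\varepsilon$ fixed by $S=L_m^\varepsilon(q)$ (a notational slip in the statement; Buturlakin's original has $\varepsilon$ there, and element orders of $U_n(q)$ come from tori built only from factors $q^{m_i}-(-1)^{m_i}$). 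So for $U_n(q)$ the only type (b) numbers are $[q^{m_1}-(-1)^{m_1},q^{m_2}-(-1)^{m_2}]/(n,q+1)$ (the denominator is $(n,q+1)$ because $m_1+m_2=n$ prime forces $(m_1,m_2)=1$), and these are exactly the ones your valuation argument already kills when $n\mid q+1$. The Bang--Zsigmondy analysis of $a\nmid q^{m_1m_2}-1$ that you expect to be ``the real work'' addresses a case that does not occur.

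The second problem is a genuine gap: type (d) with $n\nmid q+1$ is left unproved, and your suggested patches do not close it. The patch ``$n\nmid N$'' works only if $n\neq p$ (then $n\mid q^{n-2}+1$ would give $q\equiv -1\pmod n$, contrary to $n\nmid q+1$), but the lemma allows $n=p$ (e.g.\ $U_5(5)$), and then $n$ divides both $a$ and $N=p(q^{n-2}+1)$, so the tie stands. (One can break it by checking $a<N<2a$, but you did not.) The paper has none of these case distinctions because it settles the whole case $n\nmid q+1$ with one observation your proposal lacks: $a=q(q^{n-1}-1)/(q+1)$ is divisible by $p\cdot r_{n-1}(-q)$ (a primitive prime divisor exists since $n-1\geq4$ and $q$ is odd, and it cannot divide $q+1$), whereas no number in Lemma~\ref{L_spectra} is divisible by both $p$ and a primitive prime divisor of $(-q)^{n-1}-1$: the numbers of types (d), (e), (f) carry $p$-powers only together with factors $q^{m_i}-(-1)^{m_i}$ with $\sum m_i\leq n-2$, while those of types (a), (b), (c) are prime to $p$. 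This single Zsigmondy argument eliminates every type at once when $n\nmid q+1$, which is exactly the uniform idea missing from your proof.
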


\begin{proof}

        If $n$ does not divide $q+1$, then $a$ is divisible by $pr_{n-1}(-q)$, hence $a\notin\omega(U_n(q))$ by Lemma~\ref{L_spectra}. From now on assume that $n$ divides $q+1$.

        Expanding $q^n$ in powers of $q+1$ and then computing $a$, we obtain
    $$a=\frac{q+1}{n}\cdot \sum_{i=2}^{n}C_n^i (q+1)^{i-2}(-1)^{n-i}.$$ 
    The sum on the right is congruent to $-n(n-1)/2$ modulo $n^2$, hence $(a)_n=(q+1)_n$.

        Suppose that $a\in\omega(U_n(q))$. Then $a$ divides one of the numbers listed in Lemma~\ref{L_spectra}. Since $(a)_n=(q+1)_n$, it cannot divide the numbers in (a), (b), (d), or (e).
	
        We now show that the numbers in the remaining items are less than $a$.
	
	Let $b=[q^{m_1}-(-1)^{m_1},\dots,q^{m_s}-(-1)^{m_s}]$, where $m_1+\dots+m_s=n$ and $s\geq 3$. Then $$b\leq (q+1)\cdot \frac{q^{m_1}-(-1)^{m_1}}{q+1}\dots\frac{q^{m_s}-(-1)^{m_s}}{q+1}<(q+1)q^{n-s}\leq (q+1)q^{n-3}.$$ 
	
	Similarly, if $b=p^k[q^{m_1}-(-1)^{m_1},\dots,q^{m_s}-(-1)^{m_s}]$, where $p^{k-1}+1+m_1+\dots+m_s=n$ and $s\geq 2$, then $$b\leq p^k(q+1)q^{n-p^{k-1}-1-s}\leq (q+1)q^{n-1-s}\leq (q+1)q^{n-3},$$
	since $p^k\leq q^{p^{k-1}}$. 
	
	On the other hand, $n\leq (q+1)/2$ and $q\geq 9$, thus $a\geq \frac{2(q^n+1)}{(q+1)^2}-1>(q+1)q^{n-3}$.\end{proof}

Denote by $s(G)$ the number of connected components of the prime graph $GK(G)$, and by $\pi_i(G)$ the $i$-th connected component, $1\le i\le s(G)$. If the order of $G$ is even, then we assume that $2\in\pi_1(G)$. Let $\mu_i(G)$ (respectively, $\omega_i(G)$) be the set consisting of those $n\in\mu(G)$ ($n\in\omega(G)$) such that every prime divisor of $n$ belongs to $\pi_i(G)$.

The following statement is due to Gruenberg and Kegel, and was published  in \cite{81Wil}.

\begin{lemma}[Gruenberg–Kegel]\label{l:gk}
Let $G$ be a finite group and suppose that the prime graph $GK(G)$ is disconnected. Then one of the following holds:
\begin{enumerate}
\item[(a)] $G$ is a Frobenius group or a $2$-Frobenius group;
\item[(b)] $G$ is an extension of a nilpotent $\pi_1(G)$-group $N$ by a group $G_1$ with $S\le G_1\le\Aut(S)$, where $S$ is a nonabelian simple group and $G_1/S$ is a $\pi_1(G)$-group. Moreover, $s(S)\ge s(G)$ and for each $i$ with $2\le i\le s(G)$ there exists $j$ with $2\le j\le s(S)$ such that $\omega_i(G)=\omega_i(S)$. 
\end{enumerate}
\end{lemma}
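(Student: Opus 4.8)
The plan is to prove this structure theorem purely group-theoretically, without the classification of finite simple groups, using as the main engine a centralizer-isolation property of the higher components. Write $\pi(G)=\pi_1\cup\dots\cup\pi_s$ with $s\ge 2$, where two primes lie in the same part exactly when they are joined in $GK(G)$, and $2\in\pi_1$. The defining feature is that $rt\notin\omega(G)$ whenever $r\in\pi_i$ and $t\in\pi_j$ with $i\ne j$. From this I would first extract the key lemma: for every $i\ge 2$ and every nontrivial $\pi_i$-element $x$, the centralizer $C_G(x)$ is a $\pi_i$-group. Indeed, if a prime $p\notin\pi_i$ divided $|C_G(x)|$, then a commuting element of order $p$ together with a suitable power of $x$ of order $r\in\pi_i$ would produce an element of order $pr$, joining $p$ to $\pi_i$, a contradiction. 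Consequently every Sylow $r$-subgroup with $r\in\pi_i$, $i\ge 2$, is self-centralizing in this strong sense, and Hall $\pi_i$-subgroups (where they exist) form a trivial-intersection family.

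If $G$ is solvable, I would derive alternative (a). Here Hall $\pi_i$-subgroups exist and are conjugate, and the isolation property yields a nilpotent Hall $\pi_i$-subgroup $H$ ($i\ge 2$) all of whose nontrivial elements have centralizer inside $H$; the classical Gruenberg--Kegel argument for solvable groups then forces $s=2$ and shows that $H$ (or a complement) acts fixed-point-freely on a normal section, so that $G$ is a Frobenius or $2$-Frobenius group.

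If $G$ is nonsolvable, I would aim for alternative (b) by passing to $\overline G=G/R$, where $R=\operatorname{Sol}(G)$ is the solvable radical. The socle of $\overline G$ is a direct product of nonabelian simple groups; the isolation property forces this product to consist of a single factor $S$, because two or more factors would let commuting elements of coprime orders from different factors realise all cross products $rt$, connecting every prime of the socle into one component and thereby absorbing the higher components --- incompatible with their isolation. Thus $C_{\overline G}(S)=1$ and $S\le\overline G\le\Aut(S)$. Next I would show that any higher-component prime dividing $|R|$ or the outer index $|\overline G/S|$ forces a $\pi_1$-element to act fixed-point-freely on a normal $\pi_i$-subgroup, which is exactly the Frobenius/$2$-Frobenius mechanism and routes such configurations back to (a); in the complementary case $R$ and $\overline G/S$ are $\pi_1$-groups, and we set $N=R$. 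Then each $\pi_i$ with $i\ge 2$ is contained in $\pi(S)$, since it meets neither $\pi(N)$ nor $\pi(\overline G/S)$, giving the extension $N\le G$ with $S\le G/N\le\Aut(S)$ and $G/N$ over $S$ a $\pi_1$-group. For nilpotency of $N$, pick $r\in\pi_i$ with $i\ge 2$; as $r\in\pi(S)$ and $r\nmid|N|$ there is an element $x\in G$ of order $r$, and $C_N(x)=C_G(x)\cap N$ is simultaneously a $\pi_i$-group and a $\pi_1$-group, hence trivial, so $x$ induces a fixed-point-free automorphism of prime order on $N$ and Thompson's theorem gives that $N$ is nilpotent. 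Finally the isolation property descends to $S$, so $\pi_i$ is a union of connected components of $GK(S)$ with $\omega_i(G)=\omega_j(S)$ for a matching $j$, and counting gives $s(S)\ge s(G)$.

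The main obstacle is the nonsolvable analysis, and within it two points. The first is pinning the socle of $G/\operatorname{Sol}(G)$ to a single simple group, which requires the cross-product connection argument to be carried out carefully across all involved primes. The second, more delicate, is the clean separation of the two alternatives: one must show that the presence of a higher-component prime in the solvable radical or in the outer automorphism part always manifests as a fixed-point-free action and therefore produces the Frobenius or $2$-Frobenius case, so that the remaining configurations genuinely give the almost-simple extension of (b). Keeping all of this free of the classification --- relying only on the isolation lemma, coprime-action fixed-point arguments, and Thompson's fixed-point-free theorem --- is what makes the bookkeeping intricate.
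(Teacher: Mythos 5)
The paper does not actually prove this lemma: it is the Gruenberg--Kegel theorem, quoted from Williams \cite{81Wil}, so your proposal has to be measured against the classical proof rather than against anything in the text. Your overall architecture does match that proof: the isolation lemma (centralizers of nontrivial $\pi_i$-elements, $i\ge2$, are $\pi_i$-groups) is correct and is the right starting point, and your argument for the nilpotency of $N$ --- an element $x$ of prime order $r\in\pi_i$ has $C_N(x)$ simultaneously a $\pi_i$- and a $\pi_1$-group, hence acts fixed-point-freely, so Thompson's theorem applies --- is exactly the standard one.

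The genuine gap is the step you call ``routing back to (a)''. First, for a prime $r\in\pi_i$, $i\ge2$, dividing $|\overline G/S|$, your proposed mechanism cannot work: the offending prime sits in the top quotient, so there is no normal $\pi_i$-subgroup for a $\pi_1$-element to act on. In fact this configuration must be shown to be \emph{impossible}, not routed to (a): in case (a) the quotient $G/\operatorname{Sol}(G)$ is trivial, $A_5$, or $S_5$ (the nonsolvable Frobenius complement case), so its top section is always a $\pi_1$-group. Ruling it out splits into a coprime case, where Thompson's theorem gives $C_S(\bar x)\neq 1$ but one must still connect the resulting prime back to $\pi_1$, and a non-coprime case $r\in\pi(S)\cap\pi(\overline G/S)$, which needs a separate Sylow-type argument; your sketch addresses neither. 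Second, for a prime of $\pi_i$ dividing $|R|$, exhibiting one fixed-point-free action on a normal section is far from concluding that $G$ \emph{globally} is a Frobenius or $2$-Frobenius group --- that implication is precisely the hard core of Gruenberg and Kegel's argument, and it uses Hall's theorems for solvable groups, Frobenius' theorem, the structure theory of Frobenius complements, and the Feit--Thompson theorem (the components $\pi_i$ with $i\ge2$ consist of odd primes); your claim to get by with only isolation, coprime action, and Thompson's fixed-point-free theorem underestimates the required inputs. Note also that your single-factor-socle argument is circular as ordered: to get a contradiction from two or more factors you need a higher-component prime dividing the socle order, and that is available only after the routing step, which you carry out later.
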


\begin{lemma}\label{l:gk2}
Let $S$ be a finite simple group whose prime graph $GK(S)$ is disconnected. Then $|\mu_i(S)|=1$ for each $2\le i\le s(S)$. Denote by $n_i=n_i(S)$ the unique element of $\mu_i(S)$. Then $S$, $\pi_1(S)$, and the numbers $n_i$ for $2\le i\le s(S)$ are known.
\end{lemma}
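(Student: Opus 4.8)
The plan is to reduce the statement to the classification of finite simple groups with disconnected prime graph. By the theorems of Williams~\cite{81Wil} and Kondrat'ev~\cite{89Kon.t}, with the corrected tables recorded in~\cite{02Maz.t}, every such $S$ is an alternating group, a sporadic group, or a group of Lie type drawn from an explicit finite list of families, and in each case the partition $\pi(S)=\pi_1(S)\cup\pi_2(S)\cup\dots\cup\pi_{s(S)}(S)$ into prime graph components is given explicitly. Once this classification is in hand, the final assertion — that $S$, $\pi_1(S)$, and the numbers $n_i$ are known — is merely a matter of reading off the tables, so the only genuine content left to establish is that $|\mu_i(S)|=1$ for every $i$ with $2\le i\le s(S)$.

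To prove $|\mu_i(S)|=1$ it suffices to show that $\omega_i(S)$ has a greatest element under divisibility, i.e.\ that there is a $\pi_i$-number $n_i\in\omega_i(S)$ divisible by every element of $\omega_i(S)$; then automatically $\mu_i(S)=\{n_i\}$, since any maximal element must then equal $n_i$ and every other element divides it. I would verify this family by family. For the groups of Lie type this is the heart of the matter: for $i\ge 2$ the component $\pi_i(S)$ consists, apart from the small-characteristic exceptions, of primitive prime divisors lying in a single set $R_d(q)$ (equivalently dividing one value $k_d(q)$), so any element whose order is a $\pi_i$-number is semisimple and lies in a maximal torus that is cyclic of order exactly $n_i$; hence its order divides $n_i$ and $\omega_i(S)$ has greatest element $n_i$. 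For the alternating and sporadic groups the claim is checked directly: for $A_m$ the nonprincipal components turn out to be singletons $\{p\}$ arising from the large primes $p$ whose $p$-cycles cannot be combined with a disjoint cycle inside $A_m$, so that $n_i=p$; for the sporadic groups one reads the components and their maximal element orders from the ATLAS~\cite{85Atlas}.

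The step I expect to be the main obstacle is the uniform treatment of the groups of Lie type, because it requires identifying, for each component $\pi_i$ with $i\ge 2$, the cyclic maximal torus whose order $n_i$ bounds all $\pi_i$-element orders, and controlling the finitely many characteristic-dependent exceptions in which a component meets the defining characteristic or picks up a small non-primitive prime. In practice this torus identification is exactly the data that the classification tables already encode, so the cleanest route is to combine the abstract torus argument with a finite verification against those tables, rather than attempting to reprove the classification itself.
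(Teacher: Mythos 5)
The paper does not actually prove this lemma: its entire proof is the citation ``See \cite[Lemma 4]{00KonMaz.t}'', so any self-contained argument such as yours is necessarily a different route. Your overall plan --- invoke the Williams--Kondrat'ev classification \cite{81Wil, 89Kon.t, 02Maz.t} for the assertion that $S$, $\pi_1(S)$ and the $n_i$ ``are known'', and prove $|\mu_i(S)|=1$ by exhibiting a greatest element of $\omega_i(S)$ under divisibility --- has the right shape, and for groups of Lie type your torus argument is essentially how the explicit values $n_i$ are computed in that literature. Note, however, that there is a cleaner uniform argument for the uniqueness claim that avoids the family-by-family analysis entirely: by Williams' structural results, for each $i\ge 2$ the group $S$ has a nilpotent isolated Hall $\pi_i$-subgroup $X$, any two such subgroups are conjugate, and every $\pi_i$-element of $S$ lies in one of them; hence $\omega_i(S)=\omega(X)$, and since the exponent of a nilpotent group is realized as an element order, $\exp(X)$ is the desired greatest element and $\mu_i(S)=\{\exp(X)\}$. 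This decouples $|\mu_i(S)|=1$ from the classification, which is then needed only for the explicit data.

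One assertion in your Lie-type step is wrong as stated, and your proposed fallback mischaracterizes it. You claim that, apart from ``finitely many characteristic-dependent exceptions'', every element whose order is a $\pi_i$-number ($i\ge 2$) is semisimple and lies in a cyclic maximal torus of order $n_i$. But for $L_2(q)$ with $q=p^f$ odd, $q>3$, the set $\{p\}$ is a non-principal component of $GK(L_2(q))$ (the prime $2$ lies in $\pi((q-1)/2)$ or $\pi((q+1)/2)$ according to $q\pmod 4$), so the $\pi_i$-elements of that component are unipotent, not semisimple --- and this is an infinite family, not a finite list of exceptions. The case itself is harmless, since unipotent elements of $L_2(q)$ have order exactly $p$, so $\omega_i=\{1,p\}$ and $n_i=p$; but it must be carved out explicitly before the semisimple/cyclic-torus argument is applied, and the same care is needed wherever a component meets the defining characteristic. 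With that patch, and with the torus identification taken from the tables (as you concede, this is where the real content sits), your plan does yield the lemma.
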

\begin{proof}
See~\cite[Lemma 4]{00KonMaz.t}. 
\end{proof}

We consider only classical groups with disconnected prime graph.

\begin{table}[ht!]
 \renewcommand{\arraystretch}{1.2}   \captionsetup{justification=centering}
    \centering
    \caption{Simple classical groups with disconnected prime graph}
    \label{disconnected_graph_table}
    \begin{tabular}{|c|c|c|}
        \hline
        $S$ & \text{Conditions on $S$} & $n_i(S), 2\le i\le s(S)$ \\
        
        \hline
         $L_{2}(u)$ & $3<u\equiv e\pmod 4,\, e=\pm1$ & $\pi(u),(u+e)/2$ \\
        \hline
        $L_{2}(u)$ & $u>2,\, u\text{ is even}$ & $u-1, \, u+1$ \\
        \hline
        $L^\tau_r(u)$ & $(r,u,\tau)\neq (3,4,+)$ & \rule{0pt}{7mm}$\dfrac{u^{r}-\tau}{(u-\tau)(r,u-\tau)}$ \\[1.8ex]
        \hline
        $L^\tau_{r+1}(u)$ & $u-\tau~|~r+1$, $(r,u,\tau)\neq (5,2,-)$ & $(u^{r}-\tau)(u-\tau)$ \\
        \hline
       
        $L^+_3(4)$ & & $3, \,5, \,7$ \\
        \hline
        $L^-_6(2)$ & & $7,11$ \\
        \hline
        $S_{2m}(u)$ & $m=2^l\ge2$ & $(u^{m}+1)/(2,u-1)$ \\
        \hline
        $S_{2r}(u)$ & $u\in\{2,3\}$ &$(u^{r}-1)/(2,u-1)$ \\
        \hline
        $O_{2m+1}(u)$ & $m=2^l\ge4,\, u\text{ is odd}$ &  $(u^{m}+1)/(2,u+1)$ \\
        \hline
        $O_{2r+1}(3)$ & $r\geq3$ & $(3^{r}-1)/2$ \\
        \hline
        $O^+_{2r}(u)$ & $r\geq5,\, u\in\{2,3,5\}$ & $(u^{r}-1)/(u-1)$ \\
        \hline
        $O^+_{2r+2}(u)$ & $u\in\{2,3\}$ & $(u^{r}-1)/(2,u-1)$ \\
        \hline
        $O^-_{2m}(u)$ & $m=2^l\geq 4$ & $(u^{m}+1)/(2,u-1)$ \\
        \hline
        $O^-_{2m}(2)$ & $m = 2^l+1\geq 5$ & $2^{m-1}+1$ \\
        \hline
        $O^-_{2r}(3)$ & $5\leq r\neq2^l+1$ & $(3^{r}+1)/4$ \\
        \hline
        $O^-_{2m}(3)$ & $9\le m = 2^l+1\neq r$ & $(3^{m-1}+1)/2$ \\
        \hline
        $O^-_{2r}(3)$ & $r=2^l+1\geq5$ & $(3^{r-1}+1)/2$, $(3^{r}+1)/4$ \\
        \hline
    \end{tabular}
\end{table}

\begin{lemma}\label{disconnected_graph_prop}
Let $S$ be a classical group with disconnected prime graph. Then $S$ is a group in Table~$\operatorname{\ref{disconnected_graph_table}}$ (in the table, $r$ denotes an odd prime).	
\end{lemma}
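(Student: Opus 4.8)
The statement is a classification, and the natural plan is to derive it from the known classification of all finite simple groups with disconnected prime graph due to Williams~\cite{81Wil} and Kondrat'ev~\cite{89Kon.t}, using the corrected tables collected in~\cite{02Maz.t}. Concretely, I would take those tables, discard the entries that are not classical groups (the alternating, sporadic, and exceptional Lie-type families), and check entry by entry that what remains agrees with Table~\ref{disconnected_graph_table}, both in the defining conditions on $S$ and in the values of the invariants $n_i(S)$.

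For the verification itself I would argue through the arithmetic of $|S|$ and $\omega(S)$. For a classical group the order is, up to a power of the characteristic $v$, a product of values $\Phi_i(\pm u)$ of cyclotomic polynomials, and two primes coprime to $v$ are adjacent in $GK(S)$ exactly when $S$ has an element whose order is divisible by both; for linear and unitary groups this adjacency can be read directly from the description of $\omega(S)$ in Lemma~\ref{L_spectra}, while the symplectic and orthogonal cases are handled by the analogous spectrum descriptions. The existence of an isolated component is then guaranteed by the Bang--Zsigmondy lemma: a primitive prime divisor of the top cyclotomic factor lies in no torus that also meets $\pi_1(S)$, so it is separated from the rest of the graph. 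Going type by type --- $L_2(u)$ according to the parity of $u$; $L_r^\tau(u)$ and $L_{r+1}^\tau(u)$ with $r$ an odd prime, where the factor $\Phi_r(\tau u)$ (respectively the product $(u^r-\tau)(u-\tau)$) detaches; and the symplectic and orthogonal families, where the isolated component is of $\Phi_m(u^2)$-type or $\Phi_n(\tau u)$-type depending on whether the rank parameter is a power of $2$ or an odd prime $r$ --- reproduces the generic rows of the table, the only genuine computation being the correct index $(r,u-\tau)$, $d$, or $(2,u\mp 1)$ appearing in each $n_i(S)$.

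The main obstacle is not the existence of the components, which is essentially automatic from Bang--Zsigmondy, but bookkeeping of two kinds. First, the three cited sources must be reconciled, since the original Williams and Kondrat'ev tables contain several known misprints that were fixed only in~\cite{02Maz.t}, so the correct conditions must be transcribed with care. Second, the sporadic small entries that do not follow the generic pattern --- such as $L_3^+(4)$, $L_6^-(2)$, the excluded triples $(r,u,\tau)$, and the delicate split among the $O_{2m}^-(2)$ and $O_{2r}^-(3)$ families according to whether the dimension parameter equals $2^l+1$ --- must be isolated and listed separately, with their components $n_i(S)$ verified directly from $\omega(S)$ rather than from the generic cyclotomic argument. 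Once these special cases are pinned down, matching them against Table~\ref{disconnected_graph_table} completes the proof.
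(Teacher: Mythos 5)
Your proposal is correct and follows essentially the same route as the paper: the paper's entire proof is the citation ``See, for example,~\cite{02Maz.t}'', i.e.\ the lemma is obtained by transcribing the corrected classification tables of Williams and Kondrat'ev and keeping only the classical entries, exactly as in your first paragraph. The additional cyclotomic/Bang--Zsigmondy verification you sketch is a sound way to double-check the entries but is not needed, since the paper treats the classification as known.
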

\begin{proof}
See, for example,~\cite{02Maz.t}.
\end{proof}

A coclique of a graph is a set of pairwise non-adjacent vertices. The maximum size of a coclique is called the independence number of the graph. The independence number of the prime graph $GK(G)$ is denoted by $t(G)$. For $r\in\pi(G)$, the maximum size of a coclique of $GK(G)$ containing $r$ is called the $r$-independence number and is denoted by $t(r,G)$. 

The adjacency criteria in the prime graphs of simple groups, as well as the independence numbers of these graphs and some maximal cocliques, are described in \cite{05VasVd.t}. All maximal cocliques, along with corrected versions of some results from \cite{05VasVd.t}, can be found in \cite{11VasVd.t}.
 
 \begin{rem}
    According to Table 2 in \cite{11VasVd.t}, if $G={}^2D_n(q)$ with $n>4$ and $n\equiv 2\pmod 4$, then the maximum cocliques of $GK(G)$ are exactly the sets of the form $\Theta(G)\cup{r_n(q)}$, $\Theta(G)\cup \{r_{n/2}(q)\}$, and $\Theta(G)\cup \{r_{n-2}(q)\}$, where $\Theta(G)=\{r_i(q) \mid i \text{ even}, n<i\leq 2n \text{ or } i \text{ odd}, n<2i \leq 2n\}$. This is true in all cases except when $n=6$. In the case when $n=6$, since $2n/(n-2)$ is an odd integer, $r_{n-2}(q)$ is adjacent to $r_{2n}(q)$ by \cite[Proposition 2.5]{11VasVd.t}. Therefore, for $n=6$, the maximum cocliques of $GK(G)$ are $\Theta(G)\cup\{r_6(q)\}$ and $\Theta(G)\cup\{r_3(q)\}$, and in this case, $t(r_4(q),G)=4$. 
 \end{rem}

\begin{lemma}\label{l:str}
Let $L$ be a finite nonabelian simple group such that $t(L)\geq3$ and $t(2,L)\geq2$, let $G$ be a finite group such that $\omega(G)=\omega(L)$. Then the following statements hold.
\begin{enumerate}
\item[(a)] There is a nonabelian simple group $S$ such that $$S\leq \overline G=G/K\leq \Aut S,$$ where $K$ is the largest solvable subgroup of $G$.

\item[(b)] For every coclique $\rho$ of $GK(G)$ of size at least 3, at most one prime of $\rho$ divides  $|K|\cdot|\overline{G}/S|$. In particular, $t(S)\geq t(L)-1$.

\item[(c)] Every prime $r\in\pi(G)$ non-adjacent with $2$ in $GK(G)$ does not divide 
$|K|\cdot|\overline{G}/S|$. In particular, $t(2,S)\geq t(2,L)$.
\end{enumerate}
\end{lemma}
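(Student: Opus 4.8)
The plan is to treat this as a purely structural statement about $G$ alone: since $\omega(G)=\omega(L)$ we have $GK(G)=GK(L)$, hence $t(G)=t(L)\ge 3$ and $t(2,G)=t(2,L)\ge 2$, and $L$ plays no further role. Write $p\sim q$ when primes $p,q$ are adjacent in $GK(G)$. I would first record three facts. (i) A finite solvable group cannot contain three pairwise non-adjacent primes: passing to a Hall $\{p,q,s\}$-subgroup $H_0$, its prime graph has exactly the three isolated vertices $p,q,s$, so $s(H_0)=3$; but by Lemma~\ref{l:gk} $H_0$ is Frobenius or $2$-Frobenius, and such groups have exactly two prime graph components, a contradiction. (ii) If $A,B\trianglelefteq G$ commute elementwise and $p\mid|A|$, $q\mid|B|$ are distinct primes, then $pq\in\omega(G)$. (iii) For a solvable normal subgroup $K\trianglelefteq G$ and primes $p,q\nmid|K|$ one has $pq\in\omega(G)\iff pq\in\omega(G/K)$ (lift a cyclic subgroup of order $pq$ and take a Hall $\{p,q\}$-subgroup of its solvable preimage). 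I would also use that $\Out$ of a simple group is solvable (Schreier's conjecture, via CFSG).

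For part (a): let $K$ be the solvable radical and $\overline G=G/K$. By (i) and $t(G)\ge 3$, $G$ is nonsolvable, so $\overline G\ne 1$ has trivial solvable radical; thus $F^*(\overline G)=\mathrm{Soc}(\overline G)=P_1\times\cdots\times P_k$ is a direct product of nonabelian simple groups, and since $C_{\overline G}(\mathrm{Soc}(\overline G))=1$ we get $\mathrm{Soc}(\overline G)\le\overline G\le\Aut(\mathrm{Soc}(\overline G))$. It remains to prove $k=1$. By (ii) applied to the commuting factors $P_i$, any two distinct non-adjacent primes of $\pi(\mathrm{Soc}(\overline G))$ must divide exactly one common factor $P_i$, so every coclique meets $\pi(\mathrm{Soc}(\overline G))$ inside a single factor. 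Now use that every nonabelian simple group has even order, so $2\mid|P_i|$ for all $i$, together with $t(2,G)\ge2$: choosing an odd $r$ with $r\not\sim 2$, if $k\ge 2$ and $r\mid|P_i|$ then picking $j\ne i$ gives $2\mid|P_j|$ and, by (ii), $r\sim 2$, a contradiction; analysing where $r$ and the members of a size-$3$ coclique can then live forces $k=1$. Set $S=\mathrm{Soc}(\overline G)$.

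For parts (b) and (c): now $G$ has a normal series $1\le K\le M\le G$ with $K$ solvable, $M/K=S$ simple, and $G/M\cong\overline G/S\le\Out S$ solvable by Schreier. The crux is that the primes dividing the ``solvable sandwich'' $|K|\cdot|\overline G/S|$ are sparse inside any coclique, and the mechanism is a fixed-point-free argument. If $r\mid|K|$, choose a chief factor $V$ of $G$ inside $K$ that is an elementary abelian $r$-group; then $S$ acts on $V$, and for any prime $s\in\pi(S)$ with $s\not\sim r$ a Sylow $s$-subgroup of $S$ must act without nonzero fixed points on $V$, since a common fixed vector would yield an element of order $rs$. Were enough primes non-adjacent to $r$, the group $S$ would act on $V$ as a Frobenius complement, which is impossible for a nonabelian simple group (Frobenius complements have cyclic or generalized quaternion Sylow subgroups). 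The same idea, applied to $2$ and to outer automorphisms (an outer automorphism of order $r$ of $S$ centralizes some involution of $S$, giving an element of order $2r$), shows that a prime non-adjacent to $2$ can divide neither $|K|$ nor $|\overline G/S|$: this is part (c), and yields $t(2,S)\ge t(2,G)$. For part (b) one runs the analysis on an arbitrary coclique $\rho$ with $|\rho|\ge 3$, concluding that at most one prime of $\rho$ survives in the solvable parts, so at least $t(G)-1$ primes of $\rho$ lie in $\pi(S)$ and form a coclique of $GK(S)$, giving $t(S)\ge t(G)-1=t(L)-1$.

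I expect the main obstacle to be parts (b) and (c): making the fixed-point-free argument rigorous requires careful bookkeeping through the series $1\le K\le M\le G$, in particular relating adjacency in $GK(G)$ to adjacency in $GK(S)$ and in $GK(K)$ via Hall's theorem — the genuine difficulty being primes that simultaneously divide $|K|$ and $|S|$, for which the clean lifting statement (iii) fails and one must argue directly inside the relevant sections — and then invoking the structure of groups acting fixed-point-freely, together with properties of Frobenius complements, to rule out a nonabelian simple group behaving like one. By contrast, facts (i)--(iii) and the socle decomposition underlying (a) are routine.
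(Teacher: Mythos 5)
The paper does not actually prove this lemma: it is Vasil'ev's structure theorem, quoted from \cite{05Vas.t} (parts (a), (b), and a weaker form of (c)) and \cite{09VasGor.t} (which removes the exceptions from (c)); its proof occupies those two papers and rests on the classification of finite simple groups. Your sketch contains correct ingredients — fact (i) via Hall subgroups and Lemma~\ref{l:gk}, the socle decomposition, the idea of fixed-point-free actions on chief factors — but the decisive step in (b)/(c) fails as stated. The group acting on a chief factor $V=K_1/K_2\leq K$ is $G/C_G(V)$, not $S$: since $K$ need not centralize $V$, and since involutions of $S$ lift only to $2$-elements of $G$, the subgroup of $G/C_G(V)$ carrying $S$ is an extension having $S$ as a composition factor, possibly with a nontrivial solvable (e.g.\ central) part. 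Your fixed-point-free argument yields only that this acting group has a unique involution, hence cyclic or generalized quaternion Sylow $2$-subgroups; but such groups \emph{can} have a nonabelian simple composition factor: $SL_2(q)$ for odd $q$ and $2\cdot A_7$ have generalized quaternion Sylow $2$-subgroups. This is precisely why the theorem of \cite{05Vas.t} originally carried the exceptional cases $S\cong L_2(q)$, $A_7$ in item (c), and why eliminating them required the separate paper \cite{09VasGor.t}, which uses Hall--Higman-type analysis of minimal polynomials of $2$-elements in cross-characteristic modules for $SL_2(q)$ and $2\cdot A_7$. So the parenthetical ``Frobenius complements have cyclic or generalized quaternion Sylow subgroups, impossible for a nonabelian simple group'' does not close the argument, because the group playing the role of the complement is not $S$.

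There are two further gaps. In (a), the conclusion $k=1$ is reached by ``analysing where $r$ and the members of a size-$3$ coclique can then live,'' but the hard case is exactly when those primes divide $|K|$ or $|\overline G/\operatorname{Soc}(\overline G)|$ rather than the socle (note also that for $k\geq 5$ the quotient $\overline G/\operatorname{Soc}(\overline G)$ need not be solvable, because of the permutation action on the factors); ruling this out needs the same machinery as (b)/(c), so your ordering of the argument is circular. Second, the outer-automorphism facts are asserted rather than proved: that an automorphism of odd prime order of $S$ centralizes an involution is a CFSG-dependent case analysis of diagonal, field, and graph automorphisms, and the part of (b) concerning $|\overline G/S|$ — that two primes of a coclique cannot both divide $|\overline G/S|$ — is not addressed at all; solvability of $\overline G/S$ alone cannot give it, since solvable groups may have non-adjacent primes. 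Finally, the one claim that does admit a clean elementary proof — at most one prime of a coclique of size $\geq 3$ divides $|K|$ — is better obtained not via Frobenius complements but by taking a Hall subgroup of $K$ for two offending primes, using conjugacy of Hall subgroups in solvable groups (Frattini argument) to normalize it by an element of the third prime, and applying your fact (i).
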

\begin{proof} See \cite{05Vas.t, 09VasGor.t}.
\end{proof}

\begin{lemma}\label{l:small} Under the hypothesis of Conjecture $\operatorname{1}$, we can assume that $S$ is different from $L_2(u)$, $S_4(u)$, and $S$ is not an exceptional group of Lie type.
\end{lemma}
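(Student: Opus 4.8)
The plan is to apply the structural result of Lemma~\ref{l:str}. For every group $L$ in our list the independence numbers $t(L)$ and $t(2,L)$ are large — this is read off from the adjacency criteria and coclique descriptions of \cite{11VasVd.t} — so in particular $t(L)\geq 3$ and $t(2,L)\geq 2$, and Lemma~\ref{l:str} applies. It furnishes a nonabelian simple group $S$ with $S\leq \overline G=G/K\leq\Aut S$, where $K$ is the largest solvable normal subgroup of $G$, together with the inequalities $t(S)\geq t(L)-1$ and $t(2,S)\geq t(2,L)$. This $S$ is the unique nonabelian composition factor of $G$, and since the spectrum of a quotient consists of divisors of element orders of $G$, we have $\omega(S)\subseteq\omega(\overline G)\subseteq\omega(G)=\omega(L)$; consequently $\exp(S)\mid\exp(L)$.

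The first tool is a direct comparison of independence numbers. From the data of \cite{11VasVd.t} one records $t(S_4(u))=2$ and $t(L_2(u))=3$, while every exceptional group of Lie type has independence number at most $t(E_8(u))=12$. Whenever $t(L)$ exceeds the relevant bound, the inequality $t(S)\geq t(L)-1$ is contradicted: this eliminates $S_4(u)$ once $t(L)\geq 4$, eliminates $L_2(u)$ once $t(L)\geq 5$, and eliminates all exceptional groups once $t(L)\geq 14$. For the larger groups in the list this already rules out every candidate; the genuinely remaining cases are $L_2(u)$ against the groups of smallest rank (where $t(L)$ is close to $3$) and the high-rank exceptional groups $E_7(u)$ and $E_8(u)$.

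To treat these residual cases I would use the Gruenberg--Kegel structure of Lemma~\ref{l:gk}: since $GK(G)=GK(L)$ is disconnected, each nontrivial connected component of $GK(L)$ coincides, as a spectrum, with a nontrivial component of $GK(S)$. For the groups $L$ under consideration the nontrivial component is governed by a single large number from Table~\ref{disconnected_graph_table} — for instance $\tfrac{q^{n}+1}{(q+1)(n,q+1)}$ in the case $U_n(q)$ — which carries a primitive prime divisor $r_{2n}(q)$. Matching this component against the known isolated components $\Phi_d(u)$ of $S$ (with $d$ bounded, since $S$ has bounded Lie rank) forces a tight relation of the shape $q^{\,n-1}\approx u^{\varphi(d)}$; feeding this into $\exp(S)\mid\exp(L)$ together with the lower bounds of Lemma~\ref{l:exponents} and the estimates of Lemmas~\ref{l:23} and~\ref{l:kiPhi} bounds $u$ from above, after which the finitely many surviving parameter values are excluded by inspecting the primitive prime divisors actually present in $\omega(L)$.

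The main obstacle is exactly this last step. The independence-number comparison is soft and fails precisely on the borderline groups $L_2(u)$ and $E_7(u),E_8(u)$; there one must convert the coincidence of isolated prime-graph components into a genuine numerical contradiction, and keeping the exponent and cyclotomic estimates sharp enough to close off the small-field cases is where the real work lies.
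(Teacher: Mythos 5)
First, note what the paper's own ``proof'' is: a citation. The exclusion of $L_2(u)$ and $S_4(u)$ is quoted from \cite[Proposition~3.8]{20GrZv.t}, and the exclusion of exceptional groups is \cite[Theorem~1]{24GrePan.t}; both are substantial standalone results proved under the \emph{full} hypothesis of Conjecture~\ref{conj1}. Your attempt to reprove them from scratch has two concrete gaps. The first is quantitative: the independence-number comparison eliminates far less than you claim. For the smaller groups in $\mathcal{L}$, e.g. $L=U_7(q)$ or $L=L_8(q)$, one has $t(L)=4$, so Lemma~\ref{l:str} only yields $t(S)\geq 3$; since every exceptional group of Lie type has independence number at least $3$ (already $t(G_2(u))=t({}^3D_4(u))=3$), the comparison removes nothing there except $S_4(u)$. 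And since $t(E_8(u))=12$ while $t(L)-1\leq 12$ for all but possibly the very largest groups in $\mathcal{L}$, the groups $E_7(u)$ and $E_8(u)$ survive essentially everywhere. So the residual set is not ``$L_2(u)$ against the smallest-rank groups plus $E_7,E_8$''; it is essentially every exceptional family against most groups in the list, i.e. the bulk of the statement.

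The second gap is structural, and it is where the plan actually fails. Your treatment of the residual cases begins with ``since $GK(G)=GK(L)$ is disconnected, each nontrivial connected component of $GK(L)$ coincides with a component of $GK(S)$.'' But disconnectedness of $GK(L)$ is \emph{not} part of the hypothesis of Conjecture~\ref{conj1}: the list $\mathcal{L}$ contains all $L^\varepsilon_n(q)$ in the stated dimension ranges, and these have disconnected prime graph only under the special arithmetic conditions of Table~\ref{disconnected_graph_table} (for instance, $L_8(q)$ has disconnected graph only when $q-1$ divides $8$). The lemma is stated, and the cited results prove it, under the general hypothesis; a Gruenberg--Kegel matching of isolated components therefore has no footing for most of the groups involved. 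Even if you restrict to the disconnected case (which is all this particular paper needs downstream), your argument stops exactly where the difficulty begins: you acknowledge that converting the coincidence of components together with $\exp(S)\mid\exp(L)$ into a contradiction is ``where the real work lies,'' but that work --- bounding the rank and field of $S$ and then killing the finitely many surviving parameter values by explicit arithmetic, as Section~\ref{s:general} does for the classical candidates --- is the entire content of the results being cited, and none of it is carried out in your proposal.
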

\begin{proof}
See \cite[Proposition 3.8]{20GrZv.t} and \cite[Theorem 1]{24GrePan.t}.
\end{proof}

\section{Proof of Theorem 1}\label{s:general}

In this section, we prove Theorem~\ref{t:t1}. We split the proof into two cases:

\begin{enumerate}
    \item[1.] $L=U_n(q)$, where $n\in\{7,11,13,17,19,23\}$ and $q>3$;
    \item[2.] $L = L^\varepsilon_{n}(q)$, where $n\in\{8,12,14,18,20,24\}$ and $q-\varepsilon$ divides $n$, or $L = U_{n}(3)$, where $n\in\{7,11,13,17,19,23\}$. 
\end{enumerate}

The groups $U_n(3)$, where $n$ is a prime, are considered in Case 2, because some inequalities necessary for the proof are not satisfied for $q=3$. It should also be noted that the recognition problem for the groups $U_n(3)$ having disconnected prime graph was considered in \cite{16He}, however, the full text of this paper is not available to us, and its main result, at least in the form in which it is stated in databases, is incorrect: it claims that all such groups except $U_3(3)$ are recognizable, but $\omega(U_n(3))=\omega(\Aut (U_n(3)))$ for any prime $n$ by \cite[Theorem 1]{17Gre}.

Let $\omega(G) = \omega(L)$. Then $GK(G)$ is disconnected. By virtue of~\cite{03Ale.t} we can assume that $G$ is not a Frobenius group or a double Frobenius group. Thus, item (b) of Lemma~\ref{l:gk} is satisfied and the graph $GK(S)$ is also disconnected. Taking into account Lemma~\ref{l:small} we conclude that $S$ is one of the groups in Table~\ref{disconnected_graph_table} other than $L_2(u)$, $S_4(u)$. By Lemma~\ref{l:gk2} the equality $n_2(L) = n_i(S)$ holds for some $i$, $2 \le i \le s(S)$.

\medskip

\textbf{Case 1.} Let $L=U_n(q)$, where $n\in\{7,11,13,17,19,23\}$ and $q>3$. By Table~\ref{disconnected_graph_table}, we have $$n_2(L)=k_n(-q)=\frac{q^n+1}{(n,q+1)(q+1)}.$$

\begin{lemma}\label{no_symplectic_etc}
	$S\neq S_{2m}(u), O_{2m+1}(u), O^-_{2m}(u)$, where $m=2^l\ge4$.
\end{lemma}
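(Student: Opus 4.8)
The plan is to rule out the three families $S=S_{2m}(u)$, $O_{2m+1}(u)$, and $O^-_{2m}(u)$ with $m=2^l\ge 4$ by a size comparison. In each of these cases, Table~\ref{disconnected_graph_table} tells us that the isolated component $n_i(S)$ equals $(u^m+1)/(2,u\mp1)$, a number of magnitude roughly $u^m$. The equation $n_2(L)=n_i(S)$ from Lemma~\ref{l:gk2} forces
\[
\frac{q^n+1}{(n,q+1)(q+1)}=\frac{u^m+1}{(2,u\mp1)},
\]
so $u^m$ and $q^n$ must be of comparable size; in particular $u\ge 2$ and the characteristic $v$ of $S$ differs from the characteristic of $L$. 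The strategy is to show that $\exp(S)$ is then forced to contain a prime power that cannot lie in $\omega(L)$, contradicting $\omega(G)=\omega(L)$ via the embedding $S\le\overline G=G/K$ of Lemma~\ref{l:str}(a), so that $\exp(S)\mid\exp(\overline G)$ and every element order of $S$ divides some element order of $G$.

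First I would extract a precise size bound. From $n\le 23$ and the defining condition $n\mid q+1$ (so $n\le q+1$, $q\ge n-1$), together with Lemma~\ref{MAGr_lemma}, the value $n_2(L)$ is pinned down, and equating it to $(u^m+1)/(2,u\mp1)$ yields a tight two-sided estimate relating $u^m$ to $q^n$. Second, I would invoke Lemma~\ref{l:exponents}(b), which gives the lower bound $\exp(S)>u^{3F(m)/2}$ for exactly these three families. Since $F(m)$ grows quickly with $m$ and $m=2^l\ge 4$, this exponent is large relative to the largest element orders available in $U_n(q)$. The element orders of $L=U_n(q)$ are governed by Lemma~\ref{L_spectra}, and the point is that the maximal element orders of $U_n(q)$ are products of cyclotomic factors $\Phi_i(-q)$ with total degree bounded by $n$, so $\exp(L)$ is comparably bounded above in terms of $q$ (via Lemma~\ref{l:23}, $\prod\Phi_i(\pm u)>u^{F(m)-2}$, applied the other way as an upper-bound guide). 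Combining the size relation between $u^m$ and $q^n$ with the exponent bound $\exp(S)>u^{3F(m)/2}$ and the ceiling imposed by $\omega(L)$ produces a numerical inequality that fails for all admissible $(n,m,q,u)$.

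The main obstacle, I expect, is making the exponent comparison clean across all three families simultaneously and across the characteristic change $v\ne p$. The exponent bound in Lemma~\ref{l:exponents}(b) is stated uniformly for $S_{2m}(u)$, $O_{2m+1}(u)$, $O^+_{2m+2}(u)$, and $O^-_{2m}(u)$, which is convenient, but converting $\exp(S)>u^{3F(m)/2}$ into a statement about $\omega(L)$ requires controlling the largest prime power in $\exp(S)$ and checking that it exceeds every element order permitted by Lemma~\ref{L_spectra} for $U_n(q)$. Concretely, one must show that the factor $v^l/c$ contributes a prime power in the defining characteristic $v$ of $S$ whose size, combined with the cyclotomic part, cannot be matched by any order in $\omega(U_n(q))$; because $v\ne p$, such a prime power in characteristic $v$ would have to arise from a $\Phi_i(-q)$ factor of $U_n(q)$, and the degree accounting forces $m\le n$, after which the inequality $3F(m)/2$ against the at-most-$F(n)$-type growth on the $L$ side closes the gap. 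I would finish by checking the finitely many boundary cases (smallest $m=4$, smallest $n=7$, and the extremal $q=n-1$) by direct substitution, since these are where the asymptotic inequality is tightest.
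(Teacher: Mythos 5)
There is a genuine gap: the size comparison you propose cannot rule out these three families, because the numerical inequality you hope to violate is in fact \emph{satisfied}. Concretely, take $L=U_7(q)$ and $S=S_8(u)$, i.e.\ $m=4$. The equation $n_2(L)=n_2(S)$ forces $u^4$ to agree with $q^6$ up to a bounded factor, so $u\approx q^{3/2}$; Lemma~\ref{l:exponents}(b) then gives only $\exp(S)>u^{3F(4)/2}=u^{9}\approx q^{13.5}$, and even the true size $\exp(S_8(u))\approx u^{13}\approx q^{19.5}$ stays below the ceiling $\exp(L)<q^{F(7)+2}=q^{20}$ of Lemma~\ref{finite_cases}(b). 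The same happens for $m=16$ against $n=23$ (roughly $q^{157.5}$ against $q^{174}$). So exponent estimates leave the powers of two $m=4,8,16$ alive; this is exactly why the paper disposes of the $m=2^l$ families \emph{before} the dimension bounds of Lemma~\ref{S_bounds}, whose table lists only odd $m$ by citing this very lemma. Worse, for $m=2^l$ the field size $u$ in Table~\ref{disconnected_graph_table} is an arbitrary prime power (unlike the other symplectic and orthogonal rows, where $u\in\{2,3,5\}$), so even after bounding $m$ you are left with the two-variable Diophantine equation $(q^n+1)/((n,q+1)(q+1))=(u^m+1)/(2,u-1)$, which no growth estimate can refute. (A side error: $n\mid q+1$ is not a ``defining condition'' in Case 1 --- for prime $n$ the graph $GK(U_n(q))$ is disconnected for every $q$ --- so you may not assume $q\ge n-1$.)

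The missing idea is arithmetic, not metric. Subtract $1$ from both sides of $n_2(L)=n_2(S)$ and set $a=n_2(L)-1$, the number studied in Lemma~\ref{MAGr_lemma}. If $u$ is even, then $a=u^m$ is a prime power; this is impossible, since either $n\nmid q+1$, in which case \eqref{e:31} shows $a=q(q^{n-1}-1)/(q+1)$ is divisible by the two distinct primes $p$ and $r_{n-1}(-q)$, or $n\mid q+1$, in which case $(a)_n=(q+1)_n>1$ by Lemma~\ref{MAGr_lemma}(a), so the odd prime $n$ would divide the $2$-power $a$. Hence $u$ is odd, and then $a=(u^m-1)/2$ is an element order of each of the groups $S_{2m}(u)$, $O_{2m+1}(u)$, $O^-_{2m}(u)$, so $a\in\omega(S)\subseteq\omega(G)=\omega(L)$, contradicting Lemma~\ref{MAGr_lemma}(b). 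This short argument, rather than any exponent bound, is what eliminates the unbounded parameter $u$, and nothing in your proposal substitutes for it.
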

\begin{proof}
	Assume the contrary. Then 
	$$
		\dfrac{q^n+1}{(n,q+1)(q+1)}=n_2(S)=\dfrac{u^m+1}{(2,u-1)}.
	$$
    
	Note that
	\begin{equation}\label{e:31}
		\dfrac{q^n+1}{(q+1)}-1=\dfrac{q(q^{n-1}-1)}{q+1}
	\end{equation}

        If $u$ is even, then $n_2(L) - 1 = u^m$. At the same time, from~\eqref{e:31} and item (a) of Lemma~\ref{MAGr_lemma} it follows that $n_2(L) - 1$ is not a prime power. Therefore, $u$ is odd and $n_2(L) - 1 = (u^m - 1)/2 \in \omega(S)$. This contradicts item (b) of Lemma~\ref{MAGr_lemma}.        
\end{proof}

\begin{lemma}\label{finite_cases}
        The following statements hold: 
        \begin{itemize}
            \item[(a)] $q<u^{m/(n-2)}$;
            \item[(b)] $\exp(L)<q^{F(n)+2}$.
        \end{itemize}
\end{lemma}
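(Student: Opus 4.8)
The plan is to treat the two parts separately, since (b) is a self-contained size estimate for $L=U_n(q)$, whereas (a) exploits the equality $n_2(L)=n_i(S)$ furnished by Lemma~\ref{l:gk2} together with the list of surviving families from Lemma~\ref{no_symplectic_etc}. I would prove (a) first and then (b).

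For part (a) the clean engine is the inequality $n_2(L)>q^{n-2}$. Since $n$ is an odd prime, $n_2(L)=\frac{q^n+1}{(n,q+1)(q+1)}=\frac{\Phi_{2n}(q)}{(n,q+1)}$ and $(n,q+1)\in\{1,n\}$. If $(n,q+1)=1$, then $n_2(L)=\frac{q^n+1}{q+1}>q^{n-1}-q^{n-2}=q^{n-2}(q-1)>q^{n-2}$ at once. If $(n,q+1)=n$, then $n\mid q+1$; as $q$ is odd and $n$ an odd prime, $q\equiv-1\pmod n$ forces $q\ge 2n-1$ (the value $q=n-1$ is even, hence excluded), and $\frac{q^n+1}{n(q+1)}>q^{n-2}$ is then immediate. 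Granting $n_2(L)>q^{n-2}$, I would read off from Table~\ref{disconnected_graph_table}, for each family of $S$ surviving Lemma~\ref{no_symplectic_etc}, the bound $n_i(S)<u^m$, where $u$ is the defining field of $S$ and $m$ is the exponent occurring in $n_i(S)$. Combining $u^m>n_i(S)=n_2(L)>q^{n-2}$ yields exactly $q<u^{m/(n-2)}$.

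For part (b) I would apply Lemma~\ref{l:exponents}(a) to $L=L^-_n(q)$, obtaining $\exp(L)=\frac{p^l}{c}\prod_{i=1}^n\Phi_i(-q)$, where $p^l$ is the least power of the defining prime $p$ exceeding $n-1$ and $c\in\{1,n\}$. The first step is the elementary bound $\frac{p^l}{c}\le q^2$: when $c=n$ one has $p^l\le p(n-1)<qn$, so $\frac{p^l}{c}<q$; when $c=1$ one checks $p^l\le q^2$ over the finite range $n-1\le 22$, $q\ge5$. It then remains to establish the estimate $\prod_{i=1}^n|\Phi_i(-q)|<q^{F(n)}$, after which $\exp(L)<q^2\cdot q^{F(n)}=q^{F(n)+2}$. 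Because $\prod_{i=1}^n\Phi_i$ has degree $F(n)$, this product has size $\approx q^{F(n)}$ and the inequality is genuinely close; but for every $i\le n\le 24$ the cyclotomic polynomial $\Phi_i$ has all coefficients in $\{0,\pm1\}$, so for each of the six admissible values of $n$ the claim reduces to an explicit polynomial inequality in $q$ that is straightforward to verify, the cancellation being supplied by the factor $|\Phi_1(-q)\Phi_2(-q)|=q^2-1$ and by the negative argument.

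The main obstacle is precisely this product estimate in part (b): the target $q^{F(n)+2}$ leaves only a modest multiplicative cushion, so the naive per-factor bounds $|\Phi_i(\pm q)|\le(q+1)^{\varphi(i)}$ are hopelessly lossy, and one must rely on the cancellation intrinsic to evaluation at $-q$ (equivalently, on a direct check for the finitely many $n$). A secondary subtlety lies in part (a) for the unitary family $S=L^-_{r+1}(u)$, where $n_i(S)=(u^r+1)(u+1)$ slightly exceeds $u^{r+1}$; here I would either enlarge the exponent $m$ or invoke the sharper form $n_2(L)>\frac{q^{n-2}(q-1)}{(n,q+1)}$ to absorb the extra factor and still recover $q<u^{m/(n-2)}$.
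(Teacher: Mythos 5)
Your proposal is correct and follows essentially the same route as the paper: for (a) the paper likewise combines the lower bound $n_2(L)>q^{n-2}+1$ with the table-derived bound $n_i(S)\le u^m+1$ to get $q^{n-2}<u^m$, and for (b) it likewise bounds the $p$-part of $\exp(L)$ by $q^2$ and asserts the inequality $\prod_{i=1}^n\Phi_i(-q)<q^{F(n)}$ by direct verification for the finitely many admissible $n$, exactly as you do. The only discrepancy is cosmetic: the entry $(u^r-\tau)(u-\tau)$ in Table~\ref{disconnected_graph_table} that prompted your ``secondary subtlety'' for $L^{-}_{r+1}(u)$ is a typo for $(u^r-\tau)/(u-\tau)$, so your defensive patch there, while valid, is not needed.
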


\begin{proof}
(a) We have $n_i(S)\leq u^m+1$. At the same time, $n_2(L)=\dfrac{q^n+1}{(q+1)(n,q+1)}>q^{n-2}+1$. From the equality $n_2(L)=n_i(S)$ we obtain that $q^{n-2}+1<u^m+1$.
    
(b)   Since $q>3$, we have $\exp_p(L)\leq q^2$. It can also be verified directly that $\prod_{i=1}^n \Phi_i(-q)<q^{F(n)}$ for $n\le 24$. Now the required estimate follows from Lemma~\ref{l:exponents}.
\end{proof}
\begin{lemma}\label{S_bounds}
	 The dimension of $S$ satisfies the constraints from Table~$\operatorname{\ref{bounds}}$.
		\begin{table}[ht]
        \caption{}\label{bounds}
            \centering
			$\begin{array}{|c|c|c|c|c|c|c|}
				\hline
				\text{\backslashbox{$L$}{$S$}} & L^\tau_m(u)& S_{2m}(u)& O_{2m+1}(3)& O^-_{2m}(u)&O^+_{2m}(u)&O^+_{2m+2}(u)  \\ \hline
				U_{7}(q) & 3\le m\le 12 & m=3,5,7& m=3,5,7& m=5,7 &m=5,7 &m=3,5,7 \\ \hline
				U_{11}(q) & 11\le m\le 14 &m=5,7& m=5,7&m=7,9&m=7&m=7  \\ \hline
				U_{13}(q) & 11\le m\le 14 &m=7& m=7&m=7,9,11&m=11&-  \\ \hline
				U_{17}(q)& 17\le m\le 20 &m=11& m=11&m=11,13&m=13&m=11  \\ \hline
				U_{19}(q) & 17\le m\le 20 &m=11,13& m=11,13&m=11,13&m=13&m=13  \\ \hline
				U_{23}(q)& 23\le m\le 24 &m= 13& m= 13 &m=17&m=17& -  \\ \hline
			\end{array}$		  			  
		\end{table}
\end{lemma}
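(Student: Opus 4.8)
The plan is to bound the dimension parameter $m$ of $S$ both from above and from below, going through each column of Table~\ref{bounds} in turn. The engine for the upper bounds is the inequality $\exp(S)\le\exp(L)$, which I would justify as follows. By Lemma~\ref{l:gk}(b), $G$ is an extension of a nilpotent group $N$ by a group $G_1$ with $S\le G_1\le\Aut S$, so $S$ is a subgroup of the quotient $G/N$; since the order of $gN$ divides the order of $g$, we have $\exp(S)\mid\exp(G/N)\mid\exp(G)$, and $\exp(G)=\exp(L)$ because $\omega(G)=\omega(L)$. Hence $\exp(S)$ divides $\exp(L)$, and this single divisibility, read together with the relation $n_i(S)=n_2(L)$, will produce all the upper bounds.

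To extract a numerical bound I would combine three estimates. Lemma~\ref{finite_cases}(b) gives $\exp(L)<q^{F(n)+2}$, and Lemma~\ref{finite_cases}(a) gives $q<u^{m/(n-2)}$, so $\exp(L)<u^{m(F(n)+2)/(n-2)}$. On the other side I would lower-bound $\exp(S)$ by a power of $u$ using Lemma~\ref{l:exponents}: for the symplectic and orthogonal families the exponents $3F(m)/2$ and $(3F(m)+3F(m-1))/4$ stated there already suffice, but for $S=L^\tau_m(u)$ the bound $u^{3F(m)/4}$ is too weak, and I would replace it by $\exp(S)\ge\prod_{i=1}^m\Phi_i(\pm u)>u^{F(m)-2}$ via Lemma~\ref{l:23}. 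Substituting into $\exp(S)\le\exp(L)$ and cancelling the positive power of $u$ gives an inequality between $m$ and $n$ alone, for instance $F(m)-2<m(F(n)+2)/(n-2)$ in the linear and unitary case. Since $F$ grows faster than linearly, this fails for all large $m$; solving it for the six values of $n$, and keeping only the $m$ admitted by Table~\ref{disconnected_graph_table} (an odd prime, a prime plus one, or a power of two), yields the upper bounds of Table~\ref{bounds}. The sharp estimate is essential: the weaker $u^{3F(m)/4}$ would leave $m=13,14$ in play for $U_7(q)$, which $u^{F(m)-2}$ rules out.

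For the lower bounds I would use the independence-number inequality $t(S)\ge t(L)-1$ from Lemma~\ref{l:str}(b), which applies because $t(U_n(q))\ge 3$ and $t(2,U_n(q))\ge 2$. Reading off $t(U_n(q))$ and the independence numbers of the candidate groups $S$ from the adjacency criteria of \cite{05VasVd.t,11VasVd.t}, the requirement $t(S)\ge t(U_n(q))-1$ forces the parameter $m$ upward; combined with the minimal admissible values in Table~\ref{disconnected_graph_table} this produces the lower entries of Table~\ref{bounds} (for the smallest group $U_7(q)$ the structural minimum already suffices, whereas for the larger groups the independence number becomes the binding constraint). The main labour — and the place where care is needed — is the case analysis itself: the exponent comparison must be run through every row of Table~\ref{disconnected_graph_table} and its sub-cases (dimension $r$ versus $r+1$ for $L^\tau_m(u)$, $m$ even versus odd for the orthogonal groups, and the fixed small fields $u\in\{2,3\}$ for the symplectic and orthogonal families), and one must consistently use the sharpened estimate $u^{F(m)-2}$ for the linear and unitary groups so that the finite ranges come out exactly as narrow as stated.
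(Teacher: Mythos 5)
Your bounding machinery is exactly the paper's: the upper bounds come from $\exp(S)\le\exp(G)=\exp(L)$ combined with Lemma~\ref{finite_cases} and the lower estimates on $\exp(S)$ from Lemma~\ref{l:exponents}, sharpened via Lemma~\ref{l:23} for $S=L^\tau_m(u)$ (the paper performs the same two-step refinement), and the lower bounds come from $t(S)\ge t(L)-1$ together with the independence numbers in \cite{05VasVd.t,11VasVd.t}. However, there is a genuine gap in how you pass from these inequalities to the table: you filter the surviving dimensions only through Table~\ref{disconnected_graph_table}, explicitly admitting ``a power of two'' as a possible value of $m$, and neither of your tools can then remove those values. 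Concretely, for $L=U_7(q)$ the inequality is $b/m<(F(7)+2)/5=4$, and $S=S_8(u)$ (that is, $m=4$, admitted by Table~\ref{disconnected_graph_table} for every $u$) satisfies $3F(4)/(2\cdot 4)=2.25<4$; likewise $m=8$ survives for $U_{11}(q)$ and $U_{13}(q)$ since $3F(8)/16=4.125$ is below $44/9$ and $60/11$, and $m=16$ survives for $U_{23}(q)$ since $3F(16)/32=7.5<174/21$. The independence-number bound cannot exclude them either: the independence number of $S_{2m}(u)$ does not decrease with $m$, and smaller odd dimensions in the same column are admitted (e.g.\ $m=3$ for $U_7(q)$, $m=13$ for $U_{23}(q)$), so these power-of-two cases pass that test a fortiori. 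Your argument therefore proves a weaker statement, with spurious entries $m=4,8,16$ in the $S_{2m}(u)$, $O_{2m+1}(u)$, and $O^-_{2m}(u)$ columns.

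The missing ingredient is Lemma~\ref{no_symplectic_etc}, which the paper proves beforehand by a purely arithmetic argument and invokes in the very first line of its proof of the present lemma: if $S$ is $S_{2m}(u)$, $O_{2m+1}(u)$, or $O^-_{2m}(u)$ with $m=2^l\ge 4$, then $n_2(S)=(u^m+1)/(2,u-1)$, and subtracting $1$ from the equality $n_2(L)=n_2(S)$ forces $n_2(L)-1$ to equal either $u^m$ (impossible, since by Lemma~\ref{MAGr_lemma}(a) the number $n_2(L)-1$ is never a prime power) or $(u^m-1)/2\in\omega(S)\subseteq\omega(G)$ (impossible by Lemma~\ref{MAGr_lemma}(b)). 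Neither exponents nor cocliques enter this step. Once this lemma is in place, $m$ is odd for the three families above, and your exponent and independence-number analysis does return exactly the constraints of Table~\ref{bounds}.
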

\begin{proof}
Note that by virtue of Lemma~\ref{no_symplectic_etc} and Table~\ref{disconnected_graph_table}, if $S=S_{2m}(u)$, $O_{2m+1}(u)$, or $O_{2m}^-(u)$, then $m$ is odd.

By item (c) of Lemma~\ref{l:str}, we have $t(S)\geq t(L)-1$. Now using the independence numbers of the prime graphs of simple groups from \cite{05VasVd.t, 11VasVd.t}, it is easy to derive a lower bound on the dimension of $S$.

Let us prove the upper bound. Clearly, $u^b=\exp(S)\le\ \exp(G)=q^{a}$, where $a=\log_q(\exp(L))$ and $b= \log_u(\exp(S))$. Hence $u^{b/a}<q$. Applying Lemma~\ref{finite_cases}, we see that $a<F(n)+2$ and $m/(n-2)>b/a>b/(F(n)+2)$. Therefore,
\begin{equation}\label{e:l32} b/m<(F(n)+2)/(n-2). \end{equation}
In virtue of Lemma~\ref{l:exponents}, the ratio $b/m$ tends to infinity as $m$ tends to infinity, therefore $m$ is bounded. For each group $L$ we find the values of $m$ that satisfy the inequality~\eqref{e:l32}.

Let $L=U_{23}(q)$ and $S=S_{2m}(u), O_{2m+1}(u)$. Then by Lemma~\ref{l:exponents}, we have $b>3F(m)/2$. Hence, $3F(m)/2m<174/21$. It follows that $m\le16$. By Table~\ref{disconnected_graph_table} and Lemma~\ref{no_symplectic_etc}, we obtain that $m=13$. Other cases when $S$ is a symplectic or orthogonal group are handled in the same manner.

Let $L=U_{11}(q)$ and $S=L_m^\tau(u)$. Then $b>3F(m)/4$ by Lemma~\ref{l:exponents} and, hence, $3F(m)/4m<44/9$. It follows that $m\leq 20$. For $m\in\{17,18,19,20\}$ we use a more precise estimate from Lemma~\ref{l:23} and derive that $(F(m)-2)/4<44/9$. Thus, $m\le 14$. Other cases when $S$ is a linear or unitary group are considered in the same manner.
\end{proof}
\begin{lemma}
	$S\neq L^\tau_{m+1}(u),\, S_{2m}(u),\, O_{2m+1}(u),\, O^\tau_{2m}(u)$, where $m$ is a prime.
\end{lemma}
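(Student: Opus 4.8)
The plan is to reduce everything to two properties of the number $a=n_2(L)-1$. First, by Lemma~\ref{ri_structure} every prime divisor of $n_2(L)=k_n(-q)$ lies in $R_n(-q)$ and is therefore congruent to $1$ modulo $n$; in particular every prime divisor of $n_i(S)=n_2(L)$ is $\equiv 1\pmod n$. Second, by Lemma~\ref{MAGr_lemma}(b) we have $a\notin\omega(U_n(q))=\omega(G)$, and the computation in that lemma (via \eqref{e:31} when $n\nmid q+1$, and via the expansion of $q^n$ in powers of $q+1$ when $n\mid q+1$) shows in addition that $a$ is not a prime power. Since $\omega(S)\subseteq\omega(G)$, exactly as used in the proof of Lemma~\ref{no_symplectic_etc}, it then suffices to prove, for each of the four families with $m$ prime, that $a=n_i(S)-1$ is either a prime power or an element order of $S$: the former contradicts that $a$ is not a prime power, and the latter contradicts $a\notin\omega(G)$.

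I would first dispose of $S=L^\tau_{m+1}(u)$, where $n_i(S)=(u^m-\tau)(u-\tau)$ by Table~\ref{disconnected_graph_table}. Every prime divisor of the factor $u-\tau$ is $\equiv 1\pmod n$, while $u-\tau$ divides $m+1$, which by Lemma~\ref{S_bounds} is smaller than the least prime $\equiv 1\pmod n$; hence $u-\tau=1$, so $u=2$, $\tau=+$, $S=L_{m+1}(2)$ and $n_i(S)=2^m-1$. Then $a=2(2^{m-1}-1)\in\omega(L_{m+1}(2))$ by Lemma~\ref{L_spectra}(d) (take $k=1$, $m_1=m-1$), a contradiction. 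The same scheme settles the cases in which $n_i(S)-1$ carries the factor $p=u$ together with a semisimple cofactor that fits in a proper subgroup: for $S_{2m}(u)$ and $O_{2m+1}(3)$ the number $a$ equals the order of a unipotent element of $SL_2(u)$, resp.\ $O_3(3)$, times a torus element of order $(u^{m-1}-1)/(2,u-1)$ of the complementary symplectic, resp.\ orthogonal, summand, so $a\in\omega(S)$. Two $O^-_{2m}(u)$ sub-cases are immediate as well: if $n_i(S)=2^{m-1}+1$ then $a=2^{m-1}$ is a prime power, and if $n_i(S)=(3^{m-1}+1)/2$ then $a=(3^{m-1}-1)/2$ is the order of a purely semisimple torus element of $O^+_{2(m-1)}(3)\le S$.

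The hard part is the even-dimensional orthogonal families $O^+_{2m}(u)$ (with $u\in\{2,3,5\}$ and $n_i(S)=(u^m-1)/(u-1)$) and $O^-_{2m}(3)$ with $n_i(S)=(3^m+1)/4$. Here $a=n_i(S)-1$ again carries the factor $p=u$, but its semisimple cofactor is a full-length Singer-type torus element (coming from a $GL_m(u)$, resp.\ $GU_m(3)$, subgroup) occupying all $2m$ dimensions; such a regular semisimple element has no commuting unipotent, so $a\notin\omega(S)$, while $a$ is not a prime power either, and the uniform scheme breaks down. For these two families I would instead invoke the dimension restrictions of Lemma~\ref{S_bounds}, which confine $m$ to a short explicit list and force $u\in\{2,3,5\}$, reducing $n_2(L)=n_i(S)$ to finitely many fixed target values; each is then excluded by a direct size and $n$-adic comparison, using $n_2(L)>q^{n-2}$ together with $q\ge 5$ and the exact form $n_2(L)=k_n(-q)$ to show that no admissible odd prime power $q$ makes $k_n(-q)$ equal to the prescribed value. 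This finite elimination, rather than the uniform ``$a\in\omega(S)$'' argument, is the main obstacle.
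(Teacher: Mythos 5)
Your uniform scheme---compare $a=n_i(S)-1$ with Lemma~\ref{MAGr_lemma}(b) and with the fact that $a$ is not a prime power---is sound, and it does correctly dispose of $S_{2m}(u)$ with $u\in\{2,3\}$, of $O_{2m+1}(3)$, and of the easy minus-type cases $n_i(S)=2^{m-1}+1$ and $n_i(S)=(3^{m-1}+1)/2$; for these families your argument is actually more uniform than the paper's, which instead lists all admissible values $n_i(S)$ in a table and eliminates them by size and congruence comparisons. Your fallback to a finite elimination for $O^+_{2m}(u)$ and for $n_i(S)=(3^m+1)/4$ is also essentially what the paper does for all symplectic and orthogonal cases.

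The genuine gap is in the case $S=L^\tau_{m+1}(u)$. Your reduction to $u-\tau=1$ rests on the entry $(u^r-\tau)(u-\tau)$ of Table~\ref{disconnected_graph_table}, which is a misprint: the correct value is $n_i(S)=(u^m-\tau)/(u-\tau)$ (this is the value the paper's own computations use, and the printed one is even for odd $u$, impossible for an odd-order component). With the correct value your premise fails completely: since $u-\tau$ divides $m+1$ and $(u^m-\tau)/(u-\tau)\equiv m\pmod{u-\tau}$, the numbers $n_i(S)$ and $u-\tau$ are coprime, so the condition that all prime divisors of $n_2(L)$ are $\equiv 1\pmod n$ says nothing whatsoever about $u-\tau$, and no reduction to $S=L_{m+1}(2)$ is available. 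One can partially salvage the case by your dichotomy: $a=u(u^{m-1}-1)/(u-\tau)$ coincides with the item (d) order $v(u^{m-1}-1)/(u-\tau)$ of Lemma~\ref{L_spectra} (here $d=(m+1,u-\tau)=u-\tau$) precisely when $u=v$ is prime, and then $a\in\omega(S)$ gives the desired contradiction; but for composite $u$ the number $a$ is neither a prime power nor in $\omega(S)$, and the dichotomy is silent. A concrete witness is $S=L_8(9)$ (relevant for $L=U_7(q)$): here $n_2(S)=597871=547\cdot 1093$ with both primes $\equiv 1\pmod 7$, so no congruence test applies, while $a=597870$ has $3$-part $9$ and every member of $\omega(L_8(9))$ with $3$-part at least $9$ is at most $9(9^4-1)/8<a$, so $a\notin\omega(S)$. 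The cases $L_{12}(4)$, $L_8(9)$, $L_{14}(8)$, $U_{20}(9)$, $U_{18}(8)$, $L_{24}(25)$ are exactly where the paper needs a different idea: every prime divisor of $a$ divides the element order $v(u^{m-1}-1)/(u-\tau)\in\omega(S)$, hence all these primes are pairwise adjacent in $GK(G)$, contradicting the non-adjacency of $p$ and $r_{n-1}(-q)$ unless $n\mid q+1$; this forces $q\ge 2n-1$, after which a finite size and congruence check eliminates every candidate. Without this step the whole linear--unitary family remains unexcluded, so your proposal does not prove the lemma.
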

\begin{proof}

Assume first that $S=S_{2m}(u),\, O_{2m+1}(u),\, O^\tau_{2m}(u)$. Since $GK(S)$ is disconnected, Table 1 and Lemma 3.3 imply that for each $n$ it is necessary to consider only finitely many possibilities for $S$. Denote by $\Sigma$ the set of the numbers $n_i(S)$, $i\ge 2$, for these $S$.

The values $n_i(S)$ for all possible groups $S$ are presented in Table~\ref{ks}. Note that $s(S)=2$ if $S\neq O^{-}_{10}(3)$, and $n_2(S)$ for different values of $u$ are separated by a semicolon. If $S=O^{-}_{10}(3)$, then $s(S)=3$, $n_2(S)=41$, $n_3(S)=61$.
 
\begin{table}[htb!]
	\renewcommand{\arraystretch}{1.3}
        \centering
        \captionsetup{justification=centering}
        \caption{}\label{ks}
			$\begin{array}{|c|c|c|}
				\hline
				S & \text{Conditions}& n_i(S), 2\le i\le s(S) \\ \hline
				S_{2m}(u), O^+_{2m+2}(u) & m=3, u=2,3 & 7;13 \\
                & m=5, u=2,3&31;121\\
                & m=7, u=2,3&127;1093\\
                & m=11, u=2,3&2047;88573\\
                & m=13, u=2,3&8191;797161\\ \hline
                O_{2m+1}(3) & m=5,7,11,13& 121;1093;88573;797161 \\\hline
                O^-_{2m}(u) & m=5, u=2,3& 17;41,61 \\
                & m=7, u=3&547\\
                 & m=9, u=2,3&257;3281 \\
                & m=11, u=3&44287\\
                & m=13, u=3&398581\\
                & m=17, u=2,3&65537;21523361,32285041\\\hline
                O^+_{2m}(u) & m=5, u=2,3,5& 31;121;781 \\
                & m=7, u=2,3,5&127;1093;19531\\
                & m=11, u=2,3,5&2047;88573;12207031\\
                & m=13, u=2,3,5&8191;797161;305175781\\
                & m=17, u=2,3,5&131071;64570081;190734863281\\\hline
			\end{array}$		  			  
\end{table}    
    
Let $n=7$. Then $m\le7$ and $\Sigma={7,13,17, 31, 41, 61, 121, 127, 547, 781, 1093, 19531}$. If $q\geq7$, then $n_2(L)>\max(\Sigma)$, and if $q=3,5$, then $n_2(L)=547,13021$. Hence, $n_i(S)=547$ and $S=O_{14}^-(3)$. This contradicts to the fact that the defining characteristics of $L$ and $S$ are different.

Let $n=11,13$. Then $\Sigma=\{7,13,17, 31, 41, 61, 121, 127, 547, 781, 1093, 19531\}$. If $q\geq7$, then $n_2(L)>\max(\Sigma)$. If $q=3,5$ and $n_2(L)\in \Sigma$, then $q=u=3$; a contradiction.

Similarly, if $n=17,19,23$ and $q\ge 5$, then $n_2(L)>\max(\Sigma)$, and if $q=3$, then $n_2(L)\not\in \Sigma$.

Let $S=L^\tau_{m+1}(u)$. Then    
	
	\begin{equation*}
		\dfrac{q^n+1}{(n,q+1)(q+1)}=n_2(S)=\dfrac{u^{m-1}-\tau}{u-\tau}.
	\end{equation*}
	Assume that $(n,q+1)=1$. Subtracting 1 from both sides of the above equality, we obtain
        \begin{equation*}
        \dfrac{q(q^{n-1}-1)}{q+1} = \dfrac{u(u^{m-2}-1)}{u-\tau}.
        \end{equation*} As already noted in the proof of Lemma~\ref{MAGr_lemma}, the number $pr_{n-1}(-q)$ is not in $\omega(G)$. On the other hand, by Lemma~\ref{L_spectra} any two prime divisors of the expression on the right hand side are adjacent in $GK(S)$; a contradiction. Consequently, $n$ divides $q+1$, in particular, $q\ge2n-1$.
        
        Now we proceed as before, by writing for each $n$ the set $\Sigma$ consisting of the $n_i(S)$ for all possible $S$.	
		
	Let $n=7$. Then $S$ is one of the groups $L_{12}(u), u\in\{13,7,5,4,3,2\}$; $U_{12}(u), u\in\{11,5,3,2\}$; $S=L_8(u), u\in\{9,5,3,2\}$; $U_8(u), u\in\{7,3\}$; $L_6(u), u\in\{7,4,3,2\}$; $U_6(u), u\in\{5,2\}$; $L_4(u), u\in\{5,3\}$; $U_4(3)$; and
\begin{equation*}
		\begin{aligned}
			\Sigma& = \{7, 11, 13, 31, 121, 127, 341, 521, 547, 683, 1093, 2047, 2801, 19531,44287,\\& 88573, 102943, 597871, 1398101, 8138021, 12207031, 329554457, 149346699503\}.
	\end{aligned}
\end{equation*}
	The cases $n_i(S)\in\{1398101, 8138021, 12207031, 329554457, 149346699503\}$ are impossible due to Lemma~\ref{ri_structure}. For $q>9$ we have $n_2(L)>597871$. The case $q\leq 9$ is impossible since $q\ge 2n-1$.

	Let $n=11, 13$. Then $S=L_{12}(u), u\in\{13,7,5,4,3,2\}$; $U_{12}(u), u\in\{11,5,3,2\}$; $L_{14}(u), u\in\{8,3,2\}$; $U_{14}(13)$; and
	\begin{equation*}
		\begin{aligned}
			\Sigma& = \{683, 2047, 8191, 44287, 88573, 1398101, 8138021, 12207031, \\
			& 329554457, 23775972551, 48444505203,  78536544841, 149346699503, 21633936185161\}.
		\end{aligned}
	\end{equation*}

	If $n=11$ and $q>19$, then $n_2(L)>\max(\Sigma)$. A similar inequality holds for $n=13$ and $q>13$.
    
	If $n=17,19$, then $S=L_{20}(u), u\in\{11,5,3,2\}$; $U_{20}(u), u\in\{19, 9, 4, 3\}$; $L_{18}(u), u\in\{19,7, 4,3,2\}$; $U_{18}(u), u\in\{17,8,2\}$. If $n=23$, then $S=L_{24}(u), u\in\{25,13,9,7,5,4,3,2\}$; $U_{24}(u), u\in\{23, 11,7,5,3,2\}$. As before, we find the set $\Sigma$ and verify that $n_2(L)>\max(\Sigma)$ when $q\ge 2n-1$.
\end{proof}

\begin{lemma}\label{K_structure}
	Let $r\in\pi(K)$. If $S = L^\tau_m(u)$, where $m\geq3$ is odd, then $t(r,G)\leq2$ and $r$ divides $q^2-1$.
\end{lemma}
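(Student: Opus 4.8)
The plan is to study the action of the almost simple quotient $\overline G=G/K$, whose socle is $S=L^\tau_m(u)$, on the solvable radical $K$, and then to translate the resulting graph restriction into an arithmetic one via the description of $GK(U_n(q))$ in \cite{05VasVd.t, 11VasVd.t}. I would begin with the elementary consequences of Lemma~\ref{l:str}. Since $r$ divides $|K|$, part~(c) shows that $r$ is adjacent to $2$ in $GK(G)=GK(L)$, so $r$ lies in the component $\pi_1(L)$ containing $2$. Moreover, if $\{r,a,b\}$ is any coclique of size $3$ through $r$, then part~(b) forces $r$ to be the unique member dividing $|K|\cdot|\overline G/S|$; hence $a,b\in\pi(S)$ and both are coprime to $|K|$, so that a cyclic subgroup of order $ab$ of $S$ lifts to $G$ and adjacency in $GK(S)$ transfers to $GK(G)$. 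Thus $\{a,b\}$ is a coclique of $GK(S)$, and it suffices to bound the number of pairwise non-adjacent primes of $\pi(S)$ that are non-adjacent to $r$.

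The heart of the argument is a fixed-point computation. As $r\mid|K|$ and $K$ is solvable, choose a chief factor $V$ of $G$ with $V\le K$ which is an elementary abelian $r$-group; then $\overline G$, and hence $S$, acts on $V$ over $\mathbb F_r$. The governing principle is that a prime $a\in\pi(S)$ non-adjacent to $r$ must act without nonzero fixed points on $V$: a fixed vector together with a suitable $a$-element of $G$ would give an element of order $ra\in\omega(L)$, contradicting non-adjacency. I would then show that the primes of $S$ acting fixed-point-freely on $V$ are pairwise adjacent in $GK(S)$, so that a coclique through $r$ meets $\pi(S)$ in at most one prime; combined with the previous paragraph this yields $t(r,G)\le2$. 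A convenient first step is that all primes of the tail component $\pi_2(S)\subseteq\pi_2(L)$ are non-adjacent to $r$, whence a generator of the cyclic (Singer) torus of $S$ whose order is divisible by these primes must itself act fixed-point-freely on $V$ (as $C_V(g)\subseteq C_V(g^k)$), which already constrains $V$ strongly as an $\mathbb F_rS$-module.

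Finally, to obtain $r\mid q^2-1$ I would feed the equality $t(r,G)=t(r,U_n(q))\le2$ into the explicit prime graph of $U_n(q)$. For each $n\in\{7,11,13,17,19,23\}$ one checks that a prime $r$ with $e(r,q)\ge3$ (so $r\nmid q^2-1$), as well as $r=p$, lies in a coclique of size $3$ obtained by adjoining to $r$ two primitive prime divisors whose associated indices are large relative to $n$; hence every such prime satisfies $t(r,G)\ge3$, and the contrapositive gives $e(r,q)\le2$, that is, $r\mid q^2-1$. The main obstacle is the clique property used in the second paragraph: for the specific group $S=L^\tau_m(u)$ one must control which primes can act fixed-point-freely on one and the same irreducible $\mathbb F_rS$-module and prove that any two of them are adjacent in $GK(S)$. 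This requires the precise correspondence between cocliques of $GK(S)$ and maximal tori of $S$, together with modular-character information, in contrast to the purely combinatorial input used in the other two steps.
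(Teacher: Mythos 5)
Your outer steps do match the paper: the reduction ``$t(r,G)\le 2$ together with the adjacency criterion for $GK(U_n(q))$ implies $r\mid q^2-1$'' is exactly the paper's first (and only explicit) sentence --- it cites \cite[Propositions 2.1, 2.2]{05VasVd.t} to get $t(s,G)\ge 3$ for every $s\in\pi(G)\setminus\pi(q^2-1)$, so your third paragraph is fine, and your first paragraph is a correct use of Lemma~\ref{l:str} modulo standard lifting arguments. The genuine gap is the heart of your second paragraph, which you yourself label ``the main obstacle'': the claim that the primes of $\pi(S)$ acting fixed-point-freely on one and the same irreducible $\mathbb{F}_rS$-module are pairwise adjacent in $GK(S)$ is never proved, and without it you have no bound on $t(r,G)$ at all. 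A proposal that reduces the lemma to this unproven assertion is not a proof; worse, the assertion requires cross-characteristic modular representation theory of $L^\tau_m(u)$ (which semisimple elements can act without nonzero fixed vectors on which irreducible modules), which is substantially harder than the lemma itself and is precisely the kind of input this literature is structured to avoid.

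The paper's argument (it repeats the proof of \cite[Lemma 5.5]{24GrePan.t}) goes in the opposite direction and needs no module analysis: one exhibits explicit Frobenius subgroups $A\rtimes B$ of $S=L^\tau_m(u)$ with kernel $A$ a $v$-group ($v$ the defining characteristic of $S$) and cyclic complement $B$ whose order is divisible by suitable primitive prime divisors coming from $u^{m-1}-\tau^{m-1}$; the existence of such subgroups is where the hypothesis that $m\ge 3$ is \emph{odd} actually enters --- note that your outline never uses this hypothesis, a sign that the proposed route does not track where the difficulty sits. The standard lemma on a Frobenius subgroup $A\rtimes B$ of $G/K$ with $(|A|,|K|)=1$, where $K$ is normal and solvable, then yields $r\cdot|B|\in\omega(G)$, i.e.\ $r$ is adjacent in $GK(G)$ to every prime of $\pi(|B|)$; this manufactures enough adjacencies to force $t(r,G)\le 2$ directly, with no classification of fixed-point-free primes. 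Finally, a technical error in your setup: $\overline{G}=G/K$ need not act on a chief factor $V$ of $G$ with $V\le K$, since $K$ need not centralize such a factor (for instance when a Sylow $r$-subgroup of $K$ is nonabelian or $O_{r'}(K)\neq 1$); the fixed-point principle has to be formulated for elements of $G$ acting on sections of $K$ via coprime-action lemmas, which is how the results cited in the paper are stated.
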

\begin{proof}
        By~\cite[Propositions 2.1, 2.2]{05VasVd.t}, for any $s\in \pi(G)\setminus \pi(q^2-1)$, we have $t(s,G)\geq3$. The remaining part of the proof repeats the proof of~\cite[Lemma 5.5]{24GrePan.t}.
\end{proof}

\begin{lemma}\label{l:linear_unitary}
	$S\neq L^\tau_m(u)$, where $m$ is a prime.
\end{lemma}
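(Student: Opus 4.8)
The starting point is the equality $n_2(L)=n_2(S)$. Reading $n_2(S)$ off the row $L^\tau_r(u)$ of Table~\ref{disconnected_graph_table} and applying Lemma~\ref{l:kiPhi} with $l=1$ (since $m$ is prime), the identity becomes a comparison of two cyclotomic values,
\[
\frac{\Phi_{2n}(q)}{(n,q+1)}=n_2(L)=n_2(S)=\frac{\Phi_{m'}(u)}{(m,u-\tau)},\qquad m'=\begin{cases}m,&\tau=+,\\ 2m,&\tau=-,\end{cases}
\]
where, by Lemma~\ref{S_bounds}, the prime $m$ runs over a short explicit list for each $n$. Write $N$ for the common value. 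Then $N$ is coprime to both $p$ and $v$, and by Lemma~\ref{ri_structure} every prime divisor of $N$ is $\equiv 1$ modulo $2n$ (it lies in $R_{2n}(q)$, from the $L$-side) and modulo $m$ (it lies in $R_{m'}(u)$, from the $S$-side); in particular $N\equiv 1\pmod{2n}$ and $N\equiv 1\pmod m$. Unlike the case $S=L^\tau_{m+1}(u)$, the fact that $N-1\notin\omega(G)$ (Lemma~\ref{MAGr_lemma}) cannot be exploited directly: in dimension $m$ there is no room for the unipotent-times-semisimple element of order $N-1$, so $N-1\notin\omega(S)$ as well and no contradiction arises this way. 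A genuinely different argument is therefore required.

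Next I would pin down where the primes of $L$ sit in $G$. By Lemma~\ref{K_structure}, $\pi(K)\subseteq\pi(q^2-1)$, while $\pi(\overline G/S)\subseteq\pi(\Out S)$ consists of $2$, $m$, and the primes of $\log_v u$; hence every prime $r\in\pi(L)$ with $r\nmid q^2-1$ and $r\notin\pi(\Out S)$ belongs to $\pi(S)=\pi(L^\tau_m(u))$. Consequently all but boundedly many primitive prime divisors $r_i(q)$, $3\le i\le 2n$, of $L$ are primitive prime divisors $r_j(\tau u)$, $j\le m$, of $S$; moreover, since $\omega(S)\subseteq\omega(G)$, any two primes adjacent in $GK(S)$ are adjacent in $GK(L)$, so adjacencies coming from the tori of $S$ are forced to hold among the corresponding degrees of $U_n(q)$.

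For $m\neq n$ the congruence tightens to $N\equiv 1\pmod{2nm}$, and I would rule these cases out by confronting the cyclotomic identity with the primitive-divisor matching and the adjacency constraints of the previous paragraph: a large cyclic torus of $S$ (for instance the semisimple part $(u^{m-1}-\tau^{m-1})/(u-\tau)$) produces a pairwise-adjacent set of primitive primes whose images must be pairwise-adjacent degrees of $U_n(q)$ and, simultaneously, must match the factorisation of $N$; the differing cyclotomic degrees $\varphi(2n)=n-1$ and $\varphi(m')=m-1$ make these requirements incompatible. The decisive case is $m=n$, where the identity reads $\Phi_{2n}(q)/(n,q+1)=\Phi_{2n}(u)/(n,u+1)$ for $\tau=-$ and $\Phi_{2n}(q)/(n,q+1)=\Phi_{n}(u)/(n,u-1)$ for $\tau=+$. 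In the unitary case the strict monotonicity of $x\mapsto\Phi_{2n}(x)$ forces $u=q$ whenever the two denominators agree, and then $v=p$, contradicting that the characteristics differ; the mixed situation (say $n\mid q+1$ but $n\nmid u+1$) I would close by computing the exact $n$-parts with Lemma~\ref{l:r-part} and matching a second maximal element order of $S$ against $\omega_1(L)$. The linear case $m=n$, $\tau=+$ is settled by the same comparison, using that $\Phi_n(x)>\Phi_{2n}(x)$, so that equality forces $u<q$ while the size relation forces $u<q<2u$, a range closed off by the precise arithmetic of the two values.

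\emph{The main obstacle} is that the size and exponent estimates (Lemmas~\ref{l:exponents}, \ref{finite_cases}, \ref{l:23}) do \emph{not} bound $q$ and $u$: the relation $n_2(L)=n_2(S)$ only ties them polynomially, $q\asymp u^{(m-1)/(n-1)}$, and both remain unbounded. Hence the contradiction cannot be produced by coarse inequalities or by a finite search; it must be extracted from the exact structure of the cyclotomic identity, via lifting-the-exponent computations of the relevant prime parts together with the full spectral containment $\omega(S)\subseteq\omega(L)$ (matching element orders and their adjacencies), most acutely in the self-similar case $S=U_n(u)$, where neither the sizes nor the congruence modulo $2n$ distinguish the two sides. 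This is the point at which I would follow, and adapt, the computational scheme of \cite{24GrePan.t}.
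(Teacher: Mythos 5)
Your opening moves are sound: the identity $k_n(-q)=k_{m'}(\tau u)$, i.e. $\Phi_{2n}(q)/(n,q+1)=\Phi_{m'}(u)/(m,u-\tau)$, the congruences from Lemma~\ref{ri_structure}, the observation that coarse size/exponent estimates cannot finish the job because $q$ and $u$ are tied only polynomially, and the monotonicity argument in the sub-case $S=U_n(u)$ with equal denominators are all correct. But the first genuine gap is your decision to abandon the subtract-one idea ("a genuinely different argument is therefore required"). The paper's proof is built precisely on subtracting $1$, only used differently than in Lemma~\ref{no_symplectic_etc}: writing $t=(u^m-\tau)/(u-\tau)$, one gets
\begin{equation*}
(m,u-\tau)\,\frac{q(q^{n-1}-1)}{q+1}=(n,q+1)\,t-(m,u-\tau),
\end{equation*}
so the point is not whether $n_2(L)-1$ lies in $\omega(S)$ (it need not), but that the right-hand side is divisible by $k_{n-1}(-q)$, while Lemma~\ref{K_structure} and \cite[Lemma 3.4]{20GrZv.t} force $k_{n-1}(-q)\in\omega(S)$. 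Feeding this divisibility into the explicit description of $\omega(S)$ in Lemma~\ref{L_spectra} is what converts the two-variable identity into equations that can actually be attacked; nothing in your outline recovers this step.

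The second, decisive gap is that outside the one easy sub-case you settled, your proposal consists of assertions rather than mechanisms: "the differing cyclotomic degrees make these requirements incompatible" ($m\neq n$), "computing the exact $n$-parts \ldots and matching a second maximal element order" (mixed unitary case), "a range closed off by the precise arithmetic" (linear case). Since $q$ and $u$ are both unbounded, any actual proof must either produce a finite list of candidate values or an exact relation between $u$ and $q$; the paper does both. When $(n,q+1)\neq(m,u-\tau)$, integral polynomial gcds of $b=(n,q+1)t-(m,u-\tau)$ with the polynomials $\Phi_e(\tau u)$ yield the finite explicit sets $\Sigma$ of Table~\ref{tab:dis2}, and a computer check excludes prime-power solutions. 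When $(n,q+1)=(m,u-\tau)$, the adjacency argument ($p$ non-adjacent to $r_{n-1}(-q)$ in $GK(G)$, versus Lemma~\ref{L_spectra} applied to the right-hand side of \eqref{km=kn}) forces the exact relation $u=k_{n-1}(-q)$, collapsing everything to the one-variable equation \eqref{k_equation}, again settled computationally. Note also that the paper's case split is on $(n,q+1)$ versus $(m,u-\tau)$, not on $m=n$ versus $m\neq n$; under your split, the case $m=n$, $\tau=-$, $n\mid q+1$, $n\nmid u+1$ (which is $b=nt-1$ in the paper's Table~\ref{tab:dis2}) is exactly as hard as the general case, and your deferral to "the computational scheme of \cite{24GrePan.t}" without specifying what is to be computed leaves the core of the lemma unproved.
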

\begin{proof}	
        Assume the contrary. Then $n_2(L)=n_2(S) = k_m(\tau u)$. Subtracting 1 from both sides of this equality, multiplying by $(m,u-\tau)$, and setting $$t=(u^m-\tau)/(u-\tau),$$ we get

	\begin{equation}\label{e:main}
		(m,u-\tau)\dfrac{q(q^{n-1}-1)}{q+1}=(n,q+1)\cdot t - (m,u-\tau).
	\end{equation}

Assume that $(n,q+1)\neq (m,u-\tau)$. From \eqref{e:main} it follows that the number $$b=(n,q+1)\cdot t - (m, u-\tau)$$  is divisible by $k_{n-1}(-q)$. By Lemma~\ref{K_structure}, we have $R_{n-1}(-q) \cap \pi(K)=\varnothing$. Moreover, $R_{n-1}(-q) \cap \pi(\overline{G}/S)=\varnothing$ by~\cite[Lemma 3.4]{20GrZv.t}. Therefore, $k_{n-1}(-q)\in\omega(S)$.

Let $v$ denote the characteristic of the defining field of $S$. Suppose that $v$ divides $k_{n-1}(-q)$. We have $t\equiv1\pmod v$ and hence $$b\equiv (n,q+1)-(m,u-\tau)\equiv 0 \pmod v.$$ Consequently, $v$ divides $(n,q+1)-(m,u-\tau)$. By going through all the possible values of this difference, we see that $v\le 3$ when $n<23$ and $v\le 11$ when $n=23$.
In addition, $v\equiv 1\pmod {n-1}$ by Lemma~\ref{ri_structure}; a contradiction.

        Hence, $v$ does not divide $k_{n-1}(-q)$. Suppose that a prime $r$ divides $(k_{n-1}(-q), u-\tau)$. By Lemma~\ref{l:str}, the inequality $t(r,S)\geq t(r,L)-1\geq 3$ holds. On the other hand, if $t(r,S)\ge3$, then by~\cite[Propositions 4.1, 4.2]{05VasVd.t}, we have $t(r,S)=3$ and $m_r=(u-\tau)_r\ge r$. Hence $n=7$ and, therefore, $m\le 12$. Since $r_6(-q)\ge 7$, we conclude that $r=7$, $m=7$, and $(k_6(-q))_7=(\exp(S))_7=7$ by Lemmas \ref{l:r-part} and \ref{L_spectra}. Thus, either $(k_{n-1}(-q), u-\tau)=1$, or $n=m=7$, $7\in R_6(-q)$, $(u-\tau)_7=7$ and $(k_{6}(-q)/7, u-\tau)=1$. Set $k=k_{6}(-q)/7$ in the latter case and $k=k_{n-1}(-q)$ in the remaining cases.     

    Since $k\in\omega(S)$ and $(k,v(u-\tau))=1$, by Lemma~\ref{L_spectra} there exist $m_1,\dots, m_s\geq 2$ such that $m_1+\dots+m_s\leq m$ and $k$ divides $[u^{m_1}-\tau^{m_1},\dots, u^{m_s}-\tau^{m_s}]/(u-\tau)$. As $$\frac{u^l-\tau^l}{u-\tau}=\prod_{1<e\mid l}|\Phi_e(\tau u)|,$$ it follows that $(b,(u^l-\tau^l)/(u-\tau))$ divides $\prod_{1<e\mid l}(b,\Phi_e(\tau u))$. Thus, $k$ divides $[b_{m_1},\dots,b_{m_s}]$, where $b_l=\prod_{1<e\mid l}(b,\Phi_e(\tau u))$.

    We will show that $b$ and $\Phi_e(\tau u)$, where $e>1$, considered as polynomials in $\mathbb Q[u]$, are coprime. Consequently, there exist $f,g\in \mathbb Z[u]$ and $c\in \mathbb Z$ such that $fb+g\Phi_e(\tau u)=c$. For any integer $u$, the greatest common divisor of the numbers $b$ and $\Phi_e(\tau u)$ divides $c$. We refer to any such number $c$ as an integral greatest common divisor of $b$ and $\Phi_e(\tau u)$.
    
    All computations below were performed with the aid of a computer (see the corresponding functions in Appendix~\ref{lst:python_example}).

 1.  For the polynomials $b$ and $\Phi_e(\tau u)$, where $1<e<m$, we compute an integral greatest common divisor $c_e$ using the function \textit{integer\textunderscore polynomial\textunderscore gcd} (this function is based on Steps 1 and 2 of Algorithm 1.2 from~\cite{23Lub}).
 
 2. Using the function \textit{prime\textunderscore factorization\textunderscore string}, we obtain the prime power factorization of $c_e$. By Lemma~\ref{ri_structure}, a necessary condition for a prime to belong to $R_{n-1}(-q)$ is its congruence to $1$ modulo $n-1$. Using the function \textit{primitive\textunderscore candidates\textunderscore prime\textunderscore factors}, we find the subset $A_{n-1}$ of $\pi(c_e)$ consisting of primes satisfying this condition and we set $a_e=({c_e})_{A_{n-1}}$.

 3. For all tuples $m_1,\dots,m_s$ such that $m_1+\dots+m_s\leq m$, we compute the numbers $[a_{m_1},\dots,a_{m_s}]$. Denote the set of such numbers by $\Sigma$. Since $k$ divides some number in $\Sigma$, we obtain a series of equations on $q$. Then we verify that none of these equations has solutions in prime powers.

  Since $(n,q+1)\neq (m,u-\tau)$, three cases arise: $b=t-m$, $b=nt-m$, where $n\neq m$, and $b=nt-1$. The computed sets $\Sigma$ for all three cases are given in Table~\ref{tab:dis2}.

 \captionsetup[table]{justification=centering}
\begin{table}[ht!]
\caption{}\label{tab:dis2}
\begin{tabular}{|c|c|c|l|}

\hline
$n$ & $m$ & $b$ & $\Sigma$ \\
\hline

23 & 23 & $t-m$ & $\{6689, 3301\cdot 139921, 72161\cdot 139921, 531690051726901, 12271836836138419\}$ \\
   &    & $nt-1$ & $\{245411, 3301\cdot 139921\}$ \\
\hline
19 & 19 & $t-m$ & $\{181\cdot8461, 181\cdot 30170017, 811\cdot 2251, 127\cdot301267, 6733, 19102029337\}$ \\
   &    & $nt-1$ & $6733, 181\cdot8461, 2251\cdot811, 127\cdot301267, 181\cdot30170017\}$ \\
   & 17 & $t-m$ & $\{3907, 8191, 307\cdot1117, 1423\cdot5653\}$ \\
   &    & $nt-m$ & $\{109\cdot1153\cdot 10837, 127\cdot883\}$ \\
   &    & $nt-1$ & $\{117991\cdot14643661951, 127\cdot37, 3529\cdot379\cdot2143, 199, 1911519592511491\}$ \\
\hline
17 & 19 & $t-m$ & $\{17, 6449\cdot854417\}$ \\
   &    & $nt-m$ & $\{2161, 593\cdot19082929, 34273\cdot 6732717349681\}$ \\
   &    & $nt-1$ & $\{1633816708129, 3361, 2113\cdot 1704449\}$ \\
   & 17 & $t-m$ & $\varnothing$ \\
   &    & $nt-1$ & $\{1009, 2113\}$ \\
\hline
13 & 13 & $t-m$ & $\{61, 3697, 440677, 266981089\}$ \\
   &    & $nt-1$ & $\{22621, 101406750589\}$ \\
   & 11 & $t-m$ & $\{37, 61\}$ \\
   &    & $nt-m$ & $\{37, 433\}$ \\
   &    & $nt-1$ & $\{61\cdot157, 61\cdot 4016377\}$ \\
\hline
11 & 13 & $t-m$ & $\{11^3\cdot41\cdot31\cdot 61\}$ \\
   &    & $nt-m$ & $\{4951\cdot86531\}$ \\
   &    & $nt-1$ & $\{41\cdot71\cdot271\cdot3331, 71\cdot1171, 71\cdot 1623931\}$ \\
   & 11 & $t-m$ & $\{61\}$ \\
   &    & $nt-1$ & $\{61\}$ \\
\hline
7 & 11 & $t-m$ & $\{7,61,37\cdot333667,37\cdot7\cdot 19\}$ \\
  &    & $nt-m$ & $\{31\cdot13\cdot 19, 31\cdot19\cdot2521,1300237\}$ \\
  &    & $nt-1$ & $\{19^2\cdot43,55987,19\cdot43\cdot90217\}$ \\
  & 7  & $t-m$ & $\varnothing$ \\
  &    & $nt-1$ & $\{31\cdot61\}$ \\
  & 5  & $t-m$ & $\{7\}$ \\
  &    & $nt-m$ & $\{13\}$ \\
  &    & $nt-1$ & $\{43\}$ \\
  & 3  & $t-m$ & $\varnothing$ \\
  &    & $nt-m$ & $\varnothing$ \\
  &    & $nt-1$ & $\varnothing$ \\
\hline
\end{tabular}
\end{table} 
  
  As an example, consider the case $n=23$ and $b=t-m$. In this case $$\Sigma =\{6689,3301\cdot 139921,72161\cdot 139921, 531690051726901, 12271836836138419\}.$$ Using computer calculations, we verify that any divisor of any number in this set cannot be equal to  $(q^{23}+1)/((23,q+1)(q+1))$.

        Consequently, we can assume that $(n,q+1)=(m,u-\tau)$. Subtract $1$ from both sides of the equality $n_2(L)=n_2(S)$ again. We obtain 
	\begin{equation}\label{km=kn}
		\dfrac{q(q^{n-1}-1)}{q+1} = \dfrac{u(u^{m-1}-1)}{u-\tau}.
	\end{equation}
    
	By Lemma~\ref{L_spectra}, the primes $p$ and $r_{n-1}(-q)$ are not adjacent in $GK(G)$. On the other hand, any pair of distinct prime divisors of the right-hand side of~\eqref{km=kn} whose product does not lie in $\omega(S)$ has the form $v$ and $r_{m-1}(\tau u)$. Hence, $p\in R_{m-1}(\tau u)$ and $k_{n-1}(-q)=u$. Substituting the last equality into~\eqref{km=kn}, we have	
	\begin{equation}\label{k_equation}
		\dfrac{q(q^{n-1}-1)}{q+1} = \dfrac{k_{n-1}(-q)(k_{n-1}(-q)^{m-1}-1)}{k_{n-1}(-q)-\tau}.
	\end{equation}

        With the aid of computer, it is easy to verify that for any relevant pair $n,m$, the equation~\eqref{k_equation} in the variable $q$ has no primary roots.
\end{proof}

Thus, in Case 1 the proof is complete.

\medskip

\textbf{Case 2.} The possible $n,q$ are listed in Table~\ref{tab:dis0}, and the corresponding value $n_2(L)$ is indicated there as well.

Let $S=L_m^\tau(u)$ with $m$ prime. Then $$n_2(S)\ge \frac{u^m+1}{m(u+1)}\ge \frac{2^m+1}{3m}.$$ Hence, if $m>107$, then $n_2(L)<n_2(S)$. For each prime $m\le 107$, computer calculations show that the equations $n_2(L)= n_2(S)$ have no solutions in prime powers.

By analyzing in a similar manner all the remaining possibilities for the group $S$, we handle Case 2, and thus complete the proof of Theorem 1.

\newpage
   \begin{table}[ht!]

\renewcommand{\arraystretch}{1.3}
\centering
\captionsetup{justification=centering}

\caption{}\label{tab:dis0}
    $\begin{array}{|c|c|c|c|}
        \hline
        n &\varepsilon & q& n_2(L)  \\ \hline
        7 & - & 3 & 547 \\ \hline
        8 & + & 3,5,9 & 1093,\,19531,\,547\cdot1093\\
        & -&3,7&547,\,113\cdot911
        \\ \hline
        11 & - & 3 & 67\cdot661 \\ \hline
        12 & + & 3,5,7,13 & 23\cdot3851,\,12207031,\,1123\cdot 293459,\, 23\cdot419\cdot859\cdot18041\\
        & -&3,5,11&67\cdot661,\, 23\cdot67\cdot5281,\,23\cdot 89\cdot 199\cdot 58367
        \\ \hline
        13 & - & 3 & 398581 \\ \hline
        14 & + & 3 & 797161\\
        & -&13&13417\cdot 20333\cdot 79301
        \\ \hline
        17 & - & 3 & 103\cdot 307\cdot 1021 \\ \hline
        18 & + & 3,7,19 & 1871\cdot 34511,\, 14009\cdot 2767631689,\,3044803\cdot99995282631947\\
        & -&5,17&3061\cdot41540861,\, 45957792327018709121
        \\ \hline
        19 & - & 3 & 2851\cdot 101917 \\ \hline
        20 & + & 3,5,11 & 1597 \cdot 363889,\, 191\cdot 6271\cdot 3981071,\,6115909044841454629 \\
        & -&3,9,19&2851\cdot 101917,\, 5301533\cdot 25480398173,\,108301\cdot 1049219\cdot 870542161121

        \\ \hline
        23 & - & 3 & 47\cdot 1001523179 \\ \hline
        24 & + & 3,5,7 & 47\cdot 1001523179, 8971\cdot 332207361361,47\cdot 3083\cdot 31479823396757,  \\
        & &9,13 &47\cdot 1001523179\cdot 23535794707,\, 1381\cdot 2519545342349331183143 \\
        & &25 &47\cdot 8971\cdot 332207361361\cdot 42272797713043 \\
        & -&3,5,7&23535794707, 47\cdot 42272797713043, 3421093417510114543\\
        & &11,23 &47\cdot 1013\cdot 241363\cdot 6493405343627
        \\
        & &23 & 47\cdot 139\cdot 1013\cdot 1641281\cdot 52626071\cdot 1522029233
        \\
        \hline
    \end{array}$		  			  
\end{table} 

\newpage
\section{Appendix}

\lstset{
    language=Python,
    basicstyle=\ttfamily\small, 
    keywordstyle=\color{blue}, 
    commentstyle=\color{red}, 
    stringstyle=\color{red}, 
    frame=single, 
    breaklines=true, 
    tabsize=2, 
    numbers=none
}

\begin{lstlisting}[
    label={lst:python_example},
    caption={Utility functions in Python},
    captionpos=t
]
from sympy import (
    symbols, cyclotomic_poly, expand, gcdex, lcm, Poly, gcd, 
    simplify, cancel, factorint)
u = symbols('u')

def get_cyclotomic_polynomial(n: int, tau: int) -> Poly:
    """
    Compute the n-th cyclotomic polynomial.
    """
    return cyclotomic_poly(n, tau * u)

def cyclotomic_product(n_terms: int, tau: int) -> Poly:
    """
    Calculate the product of the first n cyclotomic polynomials.
    """
    product = 1
    for order in range(1, n_terms + 1):
        product *= cyclotomic_poly(order, tau*u)
    return expand(product)

def primitive_candidates_prime_factors(n: int, base: int) -> list:
    """
    Extract primitive prime factor candidates of n.
    """
    prime_factors = factorint(n)
    return [p for p in prime_factors if (p - 1) % base == 0]

def prime_factorization_string(n: int) -> str:
    """
    Prime power representation of n.
    """
    factors = factorint(n)
    return " . ".join(f"{p}^{e}" for p, e in factors.items())
    
def integer_polynomial_gcd(poly1: Poly, poly2: Poly) -> tuple[int, Poly]:
    """
    Compute content and gcd of polynomials using extended Euclidian 
    algorithm. To be able to use non univariate polynomial notation, 
    one should pass  cancel(simplify(f)) as an argument.
    """
    a_poly, b_poly, _ = gcdex(poly1, poly2)
    a_coeffs = Poly(a_poly, u).coeffs()
    b_coeffs = Poly(b_poly, u).coeffs()
    denominators = [
        coeff.as_numer_denom()[1] 
        for coeff in a_coeffs + b_coeffs]    ]
    return (lcm(denominators), gcd(f_poly, g_poly))
\end{lstlisting}

\lstdefinelanguage{GAP}{
    morekeywords={function, local, end, return, for, do, od, if, then, else, fi, while, repeat, until},
    morecomment=[l]{\#},
    morestring=[b]",
    basicstyle=\ttfamily\small,
    keywordstyle=\bfseries\color{black},
    commentstyle=\color{gray},
    stringstyle=\color{red},
    showstringspaces=false,
    breaklines=true,
    frame=single,
    tabsize=4,
    captionpos=t
}

\end{document}